\begin{document}
\title[]{Essential Spectra of Quasi-parabolic Composition Operators on Hardy Spaces of the Poly-disc}
\author[u. g\"{u}l]{u\u{g}ur g\"{u}l}

\address{u\u{g}ur g\"{u}l,  \newline
Hacettepe University, Department of Mathematics, 06800, Beytepe,
Ankara, TURKEY}
\email{\href{mailto:gulugur@gmail.com}{gulugur@gmail.com}}

\thanks{Submitted XXX 2012}

\subjclass[2000]{32A45} \keywords{Composition Operators, Hardy
Spaces of the poly-disc, Essential Spectra.}
\begin{abstract}
This work is a generalization of the results in [Gul] to bi-disc
case. As in [Gul], quasi-parabolic composition operators on the
Hilbert-Hardy space of the bi-disc are written as a linear
combination of Toeplitz operators and Fourier multipliers. The
C*-algebra generated by Toeplitz operators and Fourier multipliers
on the Hilbert-Hardy space of the bi-disc is written as the tensor
product of the similar C*-algebra in one variable with itself. As a
result we find a nontrivial set lying inside the essential spectra
of quasi-parabolic composition operators.
\end{abstract}

\maketitle
\newtheorem{theorem}{Theorem}
\newtheorem{acknowledgement}[theorem]{Acknowledgement}
\newtheorem{algorithm}[theorem]{Algorithm}
\newtheorem{axiom}[theorem]{Axiom}
\newtheorem{case}[theorem]{Case}
\newtheorem{claim}[theorem]{Claim}
\newtheorem{conclusion}[theorem]{Conclusion}
\newtheorem{condition}[theorem]{Condition}
\newtheorem{conjecture}[theorem]{Conjecture}
\newtheorem{corollary}[theorem]{Corollary}
\newtheorem{criterion}[theorem]{Criterion}
\newtheorem{definition}[theorem]{Definition}
\newtheorem{example}[theorem]{Example}
\newtheorem{exercise}[theorem]{Exercise}
\newtheorem{lemma}[theorem]{Lemma}
\newtheorem{notation}[theorem]{Notation}
\newtheorem{problem}[theorem]{Problem}
\newtheorem{proposition}[theorem]{Proposition}
\newtheorem{remark}[theorem]{Remark}
\newtheorem{solution}[theorem]{Solution}
\newtheorem{summary}[theorem]{Summary}
\newtheorem*{thma}{Main Theorem 1}
\newtheorem*{thmb}{Main Theorem 2}
\newtheorem*{thmc}{Theorem C}
\newtheorem*{thmd}{Theorem D}
\newcommand{\norm}[1]{\left\Vert#1\right\Vert}
\newcommand{\abs}[1]{\left\vert#1\right\vert}
\newcommand{\set}[1]{\left\{#1\right\}}
\newcommand{\Real}{\mathbb R}
\newcommand{\eps}{\varepsilon}
\newcommand{\To}{\longrightarrow}
\newcommand{\BX}{\mathbf{B}(X)}
\newcommand{\A}{\mathcal{A}}

\section*{introduction}

Quasi-parabolic composition operators is a generalization of the
composition operators induced by parabolic linear fractional
non-automorphisms of the unit disc that fix a point $\xi$ on the
boundary. These linear fractional transformations for $\xi= 1$ take
  the form
  \[\varphi_{a}(z)=\frac{2iz+a(1-z)}{2i+a(1-z)}\]
  with $\Im(a) > 0$. Quasi-parabolic composition operators on $H^{2}(\mathbb{D})$
  are composition operators induced by the symbols where `$a$' is replaced by a bounded
  analytic function `$\psi$' for which $\Im(\psi(z))>\delta>0$\quad
  $\forall z\in\mathbb{D}$. We recall that the set of cluster points $\mathcal{C}_{\xi}(\psi)$ of $\psi\in
  H^{\infty}(\mathbb{D})$ at $\xi\in\mathbb{T}$ is defined to be the set of points
  $z\in$ $\mathbb{C}$ for which there is a sequence $\{z_{n}\}\subset$
  $\mathbb{D}$ so that $z_{n}\rightarrow$ $\xi$ and
  $\psi(z_{n})\rightarrow$ $z$. Similarly for the bi-disc, $\mathcal{C}_{(\xi_{1},\xi_{2})}(\psi)$ of $\psi\in
  H^{\infty}(\mathbb{D}^{2})$ at $(\xi_{1},\xi_{2})\in\mathbb{T}^{2}$ is defined to be the set of points
  $z\in$ $\mathbb{C}$ for which there is a sequence $\{z_{n}\}\subset$
  $\mathbb{D}^{2}$ so that $z_{n}\rightarrow$ $(\xi_{1},\xi_{2})$ and
  $\psi(z_{n})\rightarrow$ $z$. In [Gul] we showed that if $\psi\in
  QC(\mathbb{T})$ then these composition operators are essentially
  normal and their essential spectra are given as
  \[\sigma_{e}(C_{\varphi})=\{e^{izt}:t\in
  [0,\infty],z\in\mathcal{C}_{1}(\psi)\}\cup\{0\}\]
where\quad $\mathcal{C}_{1}(\psi)$ is the set of cluster points of
$\psi$ at $1$.

In this work we investigate this phenomenon in the bi-disc setting.
We look at the composition operators on the Hardy space of the
bi-disc induced by symbols of the form
\[\varphi(z_{1},z_{2})=\left(\frac{2iz_{1}+\psi_{1}(z_{1},z_{2})(1-z_{1})}{2i+\psi_{1}(z_{1},z_{2})(1-z_{1})},\frac{2iz_{2}+\psi_{2}(z_{1},z_{2})(1-z_{2})}{2i+\psi_{2}(z_{1},z_{2})(1-z_{2})}\right)\]
where $\psi_{1},\psi_{2}\in H^{\infty}(\mathbb{D}^{2})$ such that
$\Im(\psi_{j}(z_{1},z_{2}))>\delta>0$\quad $\forall
(z_{1},z_{2})\in\mathbb{D}^{2}$, $j=1,2$. These symbols are carried
over via the Cayley transform to the symbols of the form
\[\tilde{\varphi}(w_{1},w_{2})=(w_{1}+\psi_{1}\circ\mathfrak{C}_{2}(w_{1},w_{2}),w_{2}+\psi_{2}\circ\mathfrak{C}_{2}(w_{1},w_{2}))\]
on the two dimensional upper half-plane $\mathbb{H}^{2}$,i.e.
\[\mathfrak{C}_{2}^{-1}\circ\varphi\circ\mathfrak{C}_{2}=\tilde{\varphi}\]
where
\[\mathfrak{C}_{2}(z_{1},z_{2})=\left(\frac{z_{1}-i}{z_{1}+i},\frac{z_{2}-i}{z_{2}+i}\right)\]
is the Cayley transform. In particular we prove the following
result:
\begin{thmb}
Let $\varphi:\mathbb{D}^{2}\rightarrow$ $\mathbb{D}^{2}$ be an
analytic self-map of $\mathbb{D}^{2}$ such that
$$\varphi(z_{1},z_{2})=\left(\frac{2iz_{1}+\psi_{1}(z_{1},z_{2})(1-z_{1})}{2i+\psi_{1}(z_{1},z_{2})(1-z_{1})},\frac{2iz_{2}+\psi_{2}(z_{1},z_{2})(1-z_{2})}{2i+\psi_{2}(z_{1},z_{2})(1-z_{2})}\right) $$
where $\psi_{j}\in$ $H^{\infty}(\mathbb{D}^{2})$ with
$\Im(\psi_{j}(z_{1},z_{2}))
> \epsilon> 0$ for all $(z_{1},z_{2})\in$ $\mathbb{D}^{2}$, $j=1,2$. Then\\
$C_{\varphi}:$ $H^{2}(\mathbb{D}^{2})\rightarrow$
$H^{2}(\mathbb{D}^{2})$ is bounded. Moreover if $\psi_{j}\in$
$(QC\otimes QC)\cap H^{\infty}(\mathbb{D}^{2})$ then we have
\[\sigma_{e}(C_{\varphi})\supseteq\{e^{i(z_{1}t_{1}+z_{2}t_{2})}:t_{1},t_{2}\in [0,\infty],z_{1}\in\mathcal{C}_{(1,1)}(\psi_{1})\textrm{and}\quad z_{2}\in\mathcal{C}_{(1,1)}(\psi_{2})\}\cup\{0\},\]
where\quad $\mathcal{C}_{(1,1)}(\psi)$ is the set of cluster points
of $\psi$ at $(1,1)\in\mathbb{T}^{2}$.
\end{thmb}
We work on the two dimensional upper half-plane $\mathbb{H}^{2}$
and use Banach algebra techniques to compute the essential spectra
of operators that correspond to ``quasi-parabolic" operators. As
in [Gul] translation operators on $H^{2}(\mathbb{H}^{2})$ can be
considered as Fourier multipliers on $H^{2}(\mathbb{H}^{2})$ where
$\mathbb{H}^{2}$ is considered as a tubular domain (we refer the
reader to [Up] for the definition and properties of fourier
multipliers on Hardy and Bergman spaces of tubular domains in
several complex variables). Throughout the present work,
$H^{2}(\mathbb{H}^{2})$ will be considered as a closed subspace of
$L^{2}(\mathbb{R}^{2})$ via the boundary values. With the help of
Cauchy integral formula we prove an integral formula that gives
composition operators as integral operators. Using this integral
formula we show that operators that correspond
to``quasi-parabolic" operators fall in a C*-algebra generated by
Toeplitz operators and Fourier multipliers.

The remainder of this paper is organized as follows: In section 1
we give the basic definitions and preliminary material that we
will use throughout. For the benefit of the reader we explicitly
recall some facts about C*-algebras, tensor products of
C*-algebras and nuclear C*-algebras. Using a version of
Paley-Wiener theorem due to Bochner we also introduce the
C*-algebra of Fourier multipliers acting on
$H^{2}(\mathbb{H}^{2})$. In Section 2 we first show that
``quasi-parabolic" composition operators are bounded on
$H^{2}(\mathbb{H}^{2})$ and prove an integral representation
formula for composition operators on $H^{2}(\mathbb{H}^{2})$. Then
we use this integral formula together with the boundedness result
to prove that a ``quasi-parabolic" composition operator is written
as a series of Toeplitz operators and Fourier multipliers which
converges in operator norm. In section 3 we analyze the C*-algebra
generated by Toeplitz operators with $QC(\mathbb{R})\otimes
QC(\mathbb{R})$ symbols and Fourier multipliers modulo compact
operators. We write this C*-algebra as the tensor product of the
C*-algebra $\Psi$ in [Gul] with itself. In doing this we follow
the approach taken by [DoH] for analyzing the Toeplitz C*-algebra
of the bi-disc. We use this tensor product to identify the
character space of the C*-algebra generated by Toeplitz operators
with $QC(\mathbb{R})\otimes QC(\mathbb{R})$ symbols and Fourier
multipliers modulo compact operators. In section 4, using the
machinery developed in sections 2 and 3, we obtain some results
about the essential spectra of ``quasi-parabolic" composition
operators.

\section{preliminaries}

In this section we fix the notation that we will use throughout
and recall some preliminary facts that will be used in the sequel.

Let $S$ be a compact Hausdorff topological space. The space of all
complex valued continuous functions on $S$ will be denoted by
$C(S)$. For any $f\in C(S)$, $\parallel f\parallel_{\infty}$ will
denote the sup-norm of $f$, i.e. $$\parallel
f\parallel_{\infty}=\sup\{\mid f(s)\mid:s\in S\}.$$ For a Banach
space $X$, $K(X)$ will denote the space of all compact operators
on $X$ and $B(X)$  will denote the space of all bounded linear
operators on $X$. The open unit disc will be denoted by
$\mathbb{D}$, the open upper half-plane will be denoted by
$\mathbb{H}$, the real line will be denoted by $\mathbb{R}$ and
the complex plane will be denoted by $\mathbb{C}$. The one point
compactification of $\mathbb{R}$ will be denoted by
$\dot{\mathbb{R}}$ which is homeomorphic to $\mathbb{T}$. For any
$z\in$ $\mathbb{C}$, $\Re(z)$ will denote the real part, and
$\Im(z)$ will denote the imaginary part of $z$, respectively. For
any subset $S\subset$ $B(H)$, where $H$ is a Hilbert space, the
C*-algebra generated by $S$ will be denoted by $C^{*}(S)$ and for
any subset $S\subset A$ where $A$ is a C*-algebra, the closed
two-sided ideal generated by $S$ will be denoted by $I^{*}(S)$.

The Hardy space of the bi-disc $H^{2}(\mathbb{D}^{2})$ is
identified as the tensor product of the two copies of the
classical Hardy space of the unit disc $H^{2}(\mathbb{D})$, i.e.
the closure of the linear span of the set of functions
$$\{h(z,w)=f(z)g(w):f,g\in H^{2}(\mathbb{D})\}$$ with respect to
the inner product
$$\langle h_{1},h_{2}\rangle=\int_{0}^{2\pi}\int_{0}^{2\pi}h_{1}(e^{i\theta_{1}},e^{i\theta_{2}})\overline{h_{2}(e^{i\theta_{1}},e^{i\theta_{2}})}d\theta_{1}d\theta_{2}.$$
In the same way the Hardy space of the two dimensional half-plane
$H^{2}(\mathbb{H}^{2})$ is identified as the tensor product of the
two copies of the Hardy space of the upper half-plane
$H^{2}(\mathbb{H})$. Note that $H^{2}(\mathbb{D}^{2})$ and
$H^{2}(\mathbb{H}^{2})$ are isometrically isomorphic. An isometric
isomorphism $\Phi:H^{2}(\mathbb{D}^{2})\rightarrow$
$H^{2}(\mathbb{H}^{2})$ is given by
\begin{equation*}
(\Phi
f)(z_{1},z_{2})=\bigg(\frac{1}{z_{1}+i}\bigg)\bigg(\frac{1}{z_{2}+i}\bigg)f\bigg(\frac{z_{1}-i}{z_{1}+i},\frac{z_{2}-i}{z_{2}+i}\bigg)
\end{equation*}
Under this isometric isomorphism $C_{\varphi}$ for an analytic
self-map $\varphi:\mathbb{D}^{2}\rightarrow\mathbb{D}^{2}$ is
carried over to
$(\frac{(\tilde{\varphi}_{1}(z_{1},z_{2})+i)(\tilde{\varphi}_{2}(z_{1},z_{2})+i)}{(z_{1}+i)(z_{2}+i)})C_{\tilde{\varphi}}$
on $H^{2}(\mathbb{H}^{2})$ through $\Phi$, where
$\tilde{\varphi}=$
$\mathfrak{C}_{2}^{-1}\circ\varphi\circ\mathfrak{C}_{2}$, i.e.we
have
\begin{equation}
\Phi C_{\varphi}\Phi^{-1} =
    T_{(\frac{(\tilde{\varphi}_{1}(z_{1},z_{2})+i)(\tilde{\varphi}_{2}(z_{1},z_{2})+i)}{(z_{1}+i)(z_{2}+i)})}C_{\tilde{\varphi}}
\end{equation}
A tubular domain $\Pi=X\oplus i\Lambda$ is a domain in
$\mathbb{C}^{n}$ where $\Lambda\subseteq\mathbb{R}^{n}$ is a cone
i.e. $x$,$y\in\Lambda$ $\Rightarrow$ $x+y\in\Lambda$ and $\forall
t>0$, $x\in\Lambda$, $tx\in\Lambda$. We observe that
$\mathbb{H}^{2}=\mathbb{R}^{2}\oplus i(\mathbb{R}^{+})^{2}$ is a
tubular domain. We have the following Paley-Wiener type theorem
due to Bochner (see [Up], p.93):
\begin{theorem}
Let $\Pi=X\oplus i\Lambda$ be a tubular domain where
$\Lambda\subseteq\mathbb{R}^{n}$ is a cone and
$X\cong\mathbb{R}^{n}$ then the Fourier transform
\begin{equation}
\mathcal{F}(f)(x)=\frac{1}{(2\pi)^{\frac{n}{2}}}\int_{\mathbb{R}^{n}}f(t)e^{-ix.t}dt
\end{equation}
maps $H^{2}(\Pi)$ isometrically onto $L^{2}(\Lambda^{*})$ where
$\Lambda^{*}=\{y\in\mathbb{R}^{n}:x.y>0\quad\forall x\in\Lambda\}$
is the dual cone of $\Lambda$.
\end{theorem}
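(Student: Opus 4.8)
The plan is to prove this by the classical route: for a holomorphic $F\in H^{2}(\Pi)$ study its horizontal slices $f_{y}(x):=F(x+iy)$, take their Fourier transforms, use the Cauchy--Riemann equations to determine the dependence on $y\in\Lambda$, and then read off both the support condition and the isometry from Plancherel's theorem. The converse (surjectivity) is obtained by running the same construction backwards.

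First I would record the standing facts. By definition of the Hardy space of a tube, for each $y\in\Lambda$ the slice $f_{y}$ lies in $L^{2}(\mathbb{R}^{n})$ and $\sup_{y\in\Lambda}\|f_{y}\|_{2}=\|F\|_{H^{2}(\Pi)}$, while $F$ is holomorphic, hence $C^{\infty}$, on the open set $\Pi$. Writing $g_{y}:=\mathcal{F}(f_{y})$, Plancherel gives $\|g_{y}\|_{2}=\|f_{y}\|_{2}$. The Cauchy--Riemann equations $\partial_{\bar z_{j}}F=0$ translate into $\partial_{y_{j}}f_{y}=i\,\partial_{x_{j}}f_{y}$, and applying $\mathcal{F}$ (under which $\partial_{x_{j}}$ becomes multiplication by $i\xi_{j}$) yields the first-order transport system $\partial_{y_{j}}g_{y}(\xi)=-\xi_{j}\,g_{y}(\xi)$ for $j=1,\dots,n$. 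Since $\Lambda$ is a connected cone, integrating this system along segments gives
\[ g_{y}(\xi)=e^{-y\cdot\xi}\,g(\xi) \]
for a single function $g$ independent of $y$.

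The technical heart is to make this last step rigorous: a priori the slices are only $L^{2}$, so I would justify the differentiation in $y$ as an identity of $L^{2}(\mathbb{R}^{n})$-valued (equivalently, tempered-distribution-valued) maps, using interior mean-value estimates for the holomorphic function $F$ to show that $y\mapsto f_{y}$ is smooth into $L^{2}$ and that the relevant difference quotients converge. With $g_{y}(\xi)=e^{-y\cdot\xi}g(\xi)$ in hand, Plancherel gives $\|f_{y}\|_{2}^{2}=\int_{\mathbb{R}^{n}}e^{-2y\cdot\xi}|g(\xi)|^{2}\,d\xi$. If $g$ charged a positive-measure set off the closed dual cone $\overline{\Lambda^{*}}$, there would be $y_{0}\in\Lambda$ with $y_{0}\cdot\xi<0$ on that set; replacing $y_{0}$ by $sy_{0}$ (still in $\Lambda$, as $\Lambda$ is a cone) and letting $s\to\infty$ would force $\sup_{y}\|f_{y}\|_{2}=\infty$, contradicting $F\in H^{2}(\Pi)$. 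Hence $g$ is supported in $\overline{\Lambda^{*}}$, and for $y\in\Lambda$, $\xi\in\Lambda^{*}$ we have $y\cdot\xi>0$, so $e^{-2y\cdot\xi}\uparrow 1$ as $y\to 0$ along a ray in $\Lambda$; monotone convergence then gives $\sup_{y\in\Lambda}\|f_{y}\|_{2}^{2}=\int_{\Lambda^{*}}|g(\xi)|^{2}\,d\xi=\|g\|_{L^{2}(\Lambda^{*})}^{2}$, which is exactly the asserted isometry. The same estimate shows $\|f_{y}-f_{0}\|_{2}\to 0$, so $F$ has an $L^{2}$ boundary value $f_{0}$ with $\mathcal{F}(f_{0})=g$, identifying the map in the statement.

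Finally, for surjectivity I would reverse the construction: given $g\in L^{2}(\Lambda^{*})$, set $F(z)=(2\pi)^{-n/2}\int_{\Lambda^{*}}g(\xi)e^{iz\cdot\xi}\,d\xi$, equivalently $F(x+iy)=\mathcal{F}^{-1}(e^{-y\cdot\xi}g)(x)$ for $y\in\Lambda$. The factor $e^{-y\cdot\xi}$, with $y\cdot\xi>0$ on $\Lambda^{*}$, provides enough decay to differentiate under the integral for $y$ in the interior of $\Lambda$, so $F$ is holomorphic on $\Pi$; the norm computation above shows $F\in H^{2}(\Pi)$ with $\|F\|_{H^{2}(\Pi)}=\|g\|_{L^{2}(\Lambda^{*})}$, and by construction $\mathcal{F}$ carries the boundary value of $F$ to $g$. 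I expect the only genuine obstacles to be the regularity justification in the transport-equation step and the careful convex-analytic treatment of the dual cone (noting that $\partial\Lambda^{*}$ has measure zero when $\Lambda$ has nonempty interior, so $L^{2}(\overline{\Lambda^{*}})=L^{2}(\Lambda^{*})$); everything else is bookkeeping with Plancherel's theorem. As a shortcut for the case actually needed here, $\mathbb{H}^{2}=\mathbb{R}^{2}\oplus i(\mathbb{R}^{+})^{2}$ has $\Lambda^{*}=(\mathbb{R}^{+})^{2}$ and $H^{2}(\mathbb{H}^{2})$ is the Hilbert-space tensor product $H^{2}(\mathbb{H})\otimes H^{2}(\mathbb{H})$, so the two-dimensional statement also follows by tensoring the classical one-variable Paley--Wiener theorem with itself.
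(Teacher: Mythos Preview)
The paper does not actually prove this theorem: it is stated as a known Paley--Wiener type result due to Bochner, with a citation to [Up], p.~93, and is used only as a black box to set up the Fourier-multiplier calculus on $H^{2}(\mathbb{H}^{2})$. There is therefore nothing in the paper to compare your argument against.

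That said, your sketch is the standard proof and is correct in outline: analyze the slices $f_{y}$, use the Cauchy--Riemann equations on the Fourier side to obtain the exponential dependence $g_{y}(\xi)=e^{-y\cdot\xi}g(\xi)$, deduce the support condition on $g$ from the uniform $L^{2}$ bound by pushing $y$ to infinity along rays in $\Lambda$, recover the isometry and boundary value by monotone convergence as $y\to 0$, and invert the construction for surjectivity. The two points you flag as needing care (the distributional justification of the transport equation and the measure-zero boundary of the dual cone) are exactly the right places to be careful, and your proposed treatments are adequate. Your closing remark that the case $\mathbb{H}^{2}$ follows by tensoring the one-variable Paley--Wiener theorem is also in the spirit of the paper, which systematically identifies $H^{2}(\mathbb{H}^{2})$ with $H^{2}(\mathbb{H})\otimes H^{2}(\mathbb{H})$.
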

Since $\mathbb{H}^{2}=\mathbb{R}^{2}\oplus i(\mathbb{R}^{+})^{2}$
and $((\mathbb{R}^{+})^{2})^{*}=(\mathbb{R}^{+})^{2}$, Bochner's
theorem gives us that
$$\mathcal{F}:H^{2}(\mathbb{H}^{2})\rightarrow
L^{2}((\mathbb{R}^{+})^{2})$$ is an isometric isomorphism.

Using Bochner's theorem we define the following class of operators
on $H^{2}(\mathbb{H}^{2})$ which we call ``Fourier Multipliers": let
$\vartheta\in C_{0}((\mathbb{R}^{+})^{2})$ then $D_{\vartheta}$
defined in the following way
\begin{equation*}
D_{\vartheta}=\mathcal{F}^{-1}M_{\vartheta}\mathcal{F}
\end{equation*}
maps $H^{2}(\mathbb{H}^{2})$ into itself. Let
\begin{equation*}
F_{C_{0}((\mathbb{R}^{+})^{2})}=\{D_{\vartheta}:\vartheta\in
C_{0}((\mathbb{R}^{+})^{2})\}
\end{equation*} then $F_{C_{0}((\mathbb{R}^{+})^{2})}$ is a
commutative C*-algebra of operators on $H^{2}(\mathbb{H}^{2})$ and
$$F_{C_{0}((\mathbb{R}^{+})^{2})}\cong C_{0}((\mathbb{R}^{+})^{2}).$$

\bigskip

For any Banach algebra $A$ let $M(A)$ denote the space of
characters of $A$ i.e. $$M(A)=\{x\in A^{*}:x(ab)=x(a)x(b)\}.$$
where $A^{*}$ is the dual space of $A$. If $A$ has identity then
$M(A)$ is a compact Hausdorff topological space with the weak*
topology. If $A$ is commutative then $M(A)$ coincides with the
maximal ideal space of $A$. If $A$ is a C*-algebra and $I$ is a
two-sided closed ideal of $A$, then the quotient algebra $A/I$ is
also a C*-algebra.For a Banach algebra $A$, we denote by $com(A)$
the two-sided closed ideal in $A$ generated by the commutators
$\{a_{1}a_{2}-a_{2}a_{1}:a_{1},a_{2}\in A\}$. It is not difficult
to see that
\begin{equation}
M(A/I)=M(A)
\end{equation}
for any closed two-sided ideal $I\subseteq$ $com(A)$ since any
character $\phi$ is zero on $com(A)$.For $a\in A$ the spectrum
$\sigma_{A}(a)$ of $a$ on $A$ is defined as
\begin{equation*}
    \sigma_{A}(a)=\{\lambda\in\mathbb{C}:\lambda e-a\ \ \textrm{is not invertible in}\ A\},
\end{equation*}
where $e$ is the identity of $A$. We will use the spectral
permanency property of C*-algebras (see [Rud], pp. 283); i.e. if
$A$ is a C*-algebra with identity and $B$ is a closed *-subalgebra
of $A$, then for any $b\in B$ we have
\begin{equation}
\sigma_{B}(b)=\sigma_{A}(b).
\end{equation}
To compute essential spectra we employ the following important
fact (see [Rud], pp. 268): If $A$ is a commutative Banach algebra
with identity then for any $a\in A$ we have
\begin{equation*}
    \sigma_{A}(a)=\{x(a):x\in M(A)\}.
\end{equation*}
In general (for $A$ not necessarily commutative), we have
\begin{equation}
    \sigma_{A}(a)\supseteq\{x(a):x\in M(A)\}.
\end{equation}
Let $H$ and $K$ be two given Hilbert spaces. On the algebraic
tensor product $H\otimes K$ of $H$ and $K$, there is a unique
inner product $\langle.,.\rangle$ satisfying the following
equation
\begin{equation*}
\langle x_{1}\otimes y_{1},x_{2}\otimes y_{2}\rangle=\langle
x_{1},x_{2}\rangle_{H}\langle y_{1},y_{2}\rangle_{K}
\end{equation*}
$\forall x_{1},x_{2}\in H\quad y_{1},y_{2}\in K$ (See [Mur]
pp.185). For any $T\in B(H)$ and $S\in B(K)$ there is a unique
operator $T\hat{\otimes} S\in$ $B(H\otimes K)$ satisfying the
following equation:
\begin{equation*}
(T\hat{\otimes} S)(x\otimes y)=Tx\otimes Sy
\end{equation*}
Moreover $\parallel T\hat{\otimes} S\parallel=$ $\parallel
T\parallel\parallel S\parallel$ (See [Mur] pp.187). For any two
C*-algebras $A\subset B(H)$ and $B\subset B(K)$ the algebraic
tensor product $A\odot B$ is defined to be the linear span of
operators of the form $T\hat{\otimes} S$ i.e.
\begin{equation*}
A\odot B=\{\sum_{j=1}^{n}T_{j}\hat{\otimes} S_{j}:T_{j}\in A,\quad
S_{j}\in B\}
\end{equation*}
The algebraic tensor product $A\odot B$ becomes a *-algebra with
multiplication
\begin{equation*}
(x_{1}\otimes y_{1})(x_{2}\otimes y_{2})= x_{1}x_{2}\otimes
y_{1}y_{2}
\end{equation*}
and involution
\begin{equation*}
(x\otimes y)^{*}=x^{*}\otimes y^{*}
\end{equation*}
However there might be more than one norm making the closure of
$A\odot B$ into a C*-algebra. If $\gamma$ is a pre C*-algebra norm
on $A\odot B$ we denote by $A\otimes_{\gamma} B$ the closure of
$A\odot B$ with respect to this pre C*-algebra norm $\gamma$.
 A C*-algebra $A$ is called ``nuclear" if for any C*-algebra
$B$ there is a unique pre C*-algebra norm on the algebraic tensor
product $A\odot B$ of $A$ and $B$. A well-known theorem of
Takesaki asserts that any commutative C*-algebra is nuclear (see
[Mur] p.205). An extension of a C*-algebra by nuclear C*-algebras
is nuclear, i.e. if $A$, $B$ and $C$ are C*-algebras s.t. the
following sequence
\begin{equation*}
0\xrightarrow{} A\xrightarrow{j} B\xrightarrow{\pi} C\xrightarrow{}
0
\end{equation*}
is short exact and $A$ and $C$ are nuclear then $B$ is also
nuclear (see [Mur] p.212). For any separable Hilbert space $H$ the
C*-algebra of all compact operators $K(H)$ on $H$ is nuclear (see
[Mur] pp. 183 and 196). For any separable Hilbert spaces $H_{1}$
and $H_{2}$ we have
\begin{equation}
K(H_{1}\otimes H_{2})=K(H_{1})\otimes K(H_{2})
\end{equation}
(See [DoH] pp.207).
 We recall the
following fact about tensor products of C*-algebras: If $A$ and
$B$ are C*-algebras then we have
\begin{equation}
M(A\otimes B)\cong M(A)\times M(B)
\end{equation}
that is the map
$(\phi_{1},\phi_{2})\rightarrow\phi_{1}\hat{\otimes}\phi_{2}$
where
\begin{equation}
(\phi_{1}\hat{\otimes}\phi_{2})(a\otimes b)=\phi_{1}(a)\phi_{2}(b)
\end{equation}
is a homeomorphism of $M(A)\times M(B)$ onto $M(A\otimes B)$. See
[Mur] pp.189. The essential spectrum $\sigma_{e}(T)$ of an
operator $T$ acting on a Banach
  space $X$ is the spectrum of the coset of $T$ in the Calkin algebra
  $\mathcal{B}(X)/K(X)$, the algebra of bounded linear operators modulo
  compact operators. The well known Atkinson's theorem identifies the essential
  spectrum of $T$ as the set of all $\lambda\in$ $\mathbb{C}$ for
  which $\lambda I-T$ is not a Fredholm operator. The essential norm of $T$ will be denoted by $\parallel T\parallel_{e}$ which is defined as
\begin{equation*}
 \parallel T\parallel_{e}=\inf\{\parallel T+K\parallel:K\in K(X)\}
\end{equation*}
   The bracket $[\cdot]$ will denote the equivalence class modulo
  $K(X)$.
  Using the isometric isomorphism $\Phi$, one may transfer Fatou's
theorem in the bi-disc case to two dimensional upper half-plane
and may embed
   $H^{2}(\mathbb{H}^{2})$ in $L^{2}(\mathbb{R}^{2})$ via
   $f\longrightarrow$ $f^{*}$ where $f^{*}(x_{1},x_{2})=$
   $\lim_{y\rightarrow 0}f(x_{1}+iy,x_{2}+iy)$. This embedding is an
   isometry.

Throughout the paper, using $\Phi$, we will go back and forth
between $H^{2}(\mathbb{D}^{2})$ and $H^{2}(\mathbb{H}^{2})$. We
use the property that $\Phi$ preserves spectra, compactness and
essential spectra i.e. if $T\in B(H^{2}(\mathbb{D}^{2}))$ then
\begin{equation*}
    \sigma_{B(H^{2}(\mathbb{D}^{2}))}(T)=\sigma_{B(H^{2}(\mathbb{H}^{2}))}(\Phi\circ
    T\circ\Phi^{-1}),
\end{equation*}
$K\in K(H^{2}(\mathbb{D}^{2}))$ if and only if $\Phi\circ
K\circ\Phi^{-1} \in K(H^{2}(\mathbb{H}^{2}))$ and hence we have
\begin{equation}
    \sigma_{e}(T)=\sigma_{e}(\Phi\circ T\circ\Phi^{-1}).
\end{equation}
The local essential range $\mathcal{R}_{\infty}(\psi)$ of $\psi\in
L^{\infty}(\mathbb{R})$ at $\infty$
  is defined as the set of points $z\in$ $\mathbb{C}$ so that, for all
$\varepsilon > 0$ and $n > 0$, we have
\begin{equation*}
    \lambda(\psi^{-1}(B(z,\varepsilon))\cap(\mathbb{R}-[-n,n]))>0 ,
\end{equation*}
where $\lambda$ is the Lebesgue measure on $\mathbb{R}$. The
following proposition from Hoffman's book (see [Hof] pp.171)
relates the local essential range to the values of a function
$f\in A$ in a function algebra $A$ on the fiber $M_{\alpha}(A)$ of
the maximal ideal space of the function algebra:
\begin{proposition} Let $f$ be a function in
$A\subseteq L^{\infty}(\mathbb{T})$ where $A$ is a closed
*-subalgebra of $L^{\infty}(\mathbb{T})$ which contains
$C(\mathbb{T})$. The range of $\hat{f}$ on the fiber
$M_{\alpha}(A)$ consists of all complex numbers $\zeta$ with this
property: for each neighborhood $N$ of $\alpha$ and each
$\varepsilon > 0$, the set
$$\{\mid f-\zeta\mid<\varepsilon\}\cap N$$ has positive Lebesgue
measure. \label{prop4}
\end{proposition}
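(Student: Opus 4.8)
The plan is to run the whole argument inside Gelfand theory. Since $A$ is a norm-closed $*$-subalgebra of $L^{\infty}(\mathbb{T})$ containing the constants, it is a commutative unital C*-algebra, so the Gelfand transform $g\mapsto\hat{g}$ is an isometric $*$-isomorphism of $A$ onto $C(M(A))$; because $C(\mathbb{T})\subseteq A$, restriction of characters is a continuous surjection $\pi:M(A)\to M(C(\mathbb{T}))=\mathbb{T}$, and by definition $M_{\alpha}(A)=\pi^{-1}(\{\alpha\})$. I would also keep in sight the Stonean space $M(L^{\infty}(\mathbb{T}))$ and the restriction map $r:M(L^{\infty}(\mathbb{T}))\to M(A)$ (onto, as every character of a commutative C*-subalgebra extends), since it allows passage between a.e.\ statements about $f$ and values of $\hat{f}$. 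The two inclusions of the proposition are handled separately.

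For ``the range of $\hat{f}$ on $M_{\alpha}(A)$ is contained in the stated set'': fix $\varphi\in M_{\alpha}(A)$, put $\zeta=\hat{f}(\varphi)$ and $u=f-\zeta\in A$, so $\hat{u}(\varphi)=0$. Suppose toward a contradiction that $\lambda(\{|u|<\varepsilon\}\cap N)=0$ for some neighborhood $N$ of $\alpha$ and some $\varepsilon>0$, i.e.\ $|u|\ge\varepsilon$ a.e.\ on $N$. Pick $g\in C(\mathbb{T})$ with $0\le g\le 1$, $g\equiv 1$ on a neighborhood $N'\subseteq N$ of $\alpha$, and $\operatorname{supp} g\subseteq N$. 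Then $h:=|u|^{2}+(1-g)\in A$ is self-adjoint and bounded a.e.\ below by $\min(\varepsilon^{2},1/2)>0$ (split according to whether $g\ge 1/2$), hence invertible in $A$ by spectral permanency, and $v:=g\bar{u}\,h^{-1}\in A$ satisfies $vu=g|u|^{2}h^{-1}=1$ a.e.\ on $N'$. Multiplying $vu-1$ by a bump in $C(\mathbb{T})$ equal to $1$ at $\alpha$ and supported in $N'$ shows $\widehat{vu}(\varphi)=1$, contradicting $\widehat{vu}(\varphi)=\hat{v}(\varphi)\hat{u}(\varphi)=0$. Hence every $E(N,\varepsilon):=\{|f-\zeta|<\varepsilon\}\cap N$ has positive measure.

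For the reverse inclusion, assume $\zeta$ has the stated property. All the sets $E(N,\varepsilon)$ have positive Lebesgue measure, and $E(N_{1},\varepsilon_{1})\cap E(N_{2},\varepsilon_{2})\supseteq E(N_{1}\cap N_{2},\min(\varepsilon_{1},\varepsilon_{2}))$, so the family $\{E(N,\varepsilon)\}$ has the finite intersection property through positive-measure sets. In $M(L^{\infty}(\mathbb{T}))$ each $E(N,\varepsilon)$ corresponds to a nonempty clopen set (where the Gelfand transform of $\mathbf{1}_{E(N,\varepsilon)}$ equals $1$), and by compactness these clopen sets have a common point $\psi$. Taking $N=\mathbb{T}$ and letting $\varepsilon\to 0$ forces $|\psi(f)-\zeta|\le\varepsilon$, so $\psi(f)=\zeta$; and if $\psi|_{C(\mathbb{T})}$ were evaluation at some $\beta\ne\alpha$, a continuous bump $\chi$ equal to $1$ near $\beta$ and vanishing on a small neighborhood $N$ of $\alpha$ would give $\chi\mathbf{1}_{E(N,\varepsilon)}=0$, whence $1=\chi(\beta)\cdot 1=\psi(\chi)\psi(\mathbf{1}_{E(N,\varepsilon)})=0$, absurd. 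Therefore $\varphi:=\psi|_{A}$ lies in $M_{\alpha}(A)$ with $\hat{f}(\varphi)=\zeta$. The \emph{main obstacle} is precisely this second inclusion: one must produce a character of $A$ lying over the prescribed point $\alpha$ out of a purely measure-theoretic hypothesis, and the delicate bookkeeping is checking that the functional extracted in $M(L^{\infty}(\mathbb{T}))$ restricts to $A$ in the correct fiber, i.e.\ that $\pi(\psi|_{A})=\alpha$; Hoffman instead realizes the fiber through the abstract theory of the maximal ideal space of a function algebra, but either way this fiber check is the step most easily mishandled.
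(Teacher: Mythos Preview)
The paper does not supply a proof of this proposition: it simply attributes the result to Hoffman's book and remarks that Hoffman's argument, stated there for $A=L^{\infty}(\mathbb{T})$, goes through unchanged for any closed $*$-subalgebra containing $C(\mathbb{T})$. Your argument is correct and is precisely the kind of adaptation the paper has in mind. The first inclusion is handled cleanly by manufacturing a local inverse for $u=f-\zeta$ near $\alpha$ inside $A$; the key algebraic point---that $h=|u|^{2}+(1-g)$ is invertible in $A$ by spectral permanency---is exactly what the C*-hypothesis buys. For the second inclusion you lift the problem to $M(L^{\infty}(\mathbb{T}))$, use the finite intersection property on the clopen sets corresponding to $E(N,\varepsilon)$, and then restrict the resulting character to $A$; the fiber verification via the bump function $\chi$ is correct. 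This is essentially Hoffman's proof, only with the extra (and easy) step of restricting from $M(L^{\infty})$ to $M(A)$, which is harmless because restriction of characters is surjective and respects the fibration over $\mathbb{T}$.
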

Hoffman states and proves Proposition 2 for
$A=L^{\infty}(\mathbb{T})$ but in fact his proof works for a
general C*-subalgebra of $L^{\infty}(\mathbb{T})$ that contains
$C(\mathbb{T})$. By Cayley transform Hoffman's proposition holds
for $L^{\infty}(\mathbb{R})$ as well. Let us also recall the
following fact from [Gul] that we will use in the last section:
\begin{lemma}
If $\psi\in$ $QC(\mathbb{R})\cap H^{\infty}(\mathbb{H})$ we have
$$\mathcal{R}_{\infty}(\psi)=\mathcal{C}_{\infty}(\psi)$$ where $\mathcal{C}_{\infty}(\psi)$ is the cluster set of $\psi$ at
infinity which is defined as the set of points $z\in$ $\mathbb{C}$
for which there is a sequence $\{z_{n}\}\subset$ $\mathbb{H}$ so
that $z_{n}\rightarrow$ $\infty$ and $\psi(z_{n})\rightarrow$ $z$.
\end{lemma}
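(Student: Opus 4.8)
The plan is to prove the two inclusions separately; only $\mathcal{C}_{\infty}(\psi)\subseteq\mathcal{R}_{\infty}(\psi)$ will use the hypothesis $\psi\in QC$, the reverse inclusion holding for every $\psi\in H^{\infty}(\mathbb{H})$ (here $\mathcal{R}_{\infty}$ refers to the nontangential boundary function $\psi^{*}\in L^{\infty}(\mathbb{R})$). For $\mathcal{R}_{\infty}(\psi)\subseteq\mathcal{C}_{\infty}(\psi)$ I would argue directly: if $z\in\mathcal{R}_{\infty}(\psi)$ then for each $k$ the set $E_{k}=\{x\in\mathbb{R}:|x|>k,\ |\psi^{*}(x)-z|<1/k\}$ has positive Lebesgue measure, hence by Fatou's theorem it contains a point $x_{k}$ at which $\psi$ possesses the nontangential limit $\psi^{*}(x_{k})$; choosing $y_{k}\in(0,1/k)$ with $|\psi(x_{k}+iy_{k})-\psi^{*}(x_{k})|<1/k$ and setting $z_{k}=x_{k}+iy_{k}\in\mathbb{H}$ gives $z_{k}\to\infty$ and $\psi(z_{k})\to z$, so $z\in\mathcal{C}_{\infty}(\psi)$.

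For the reverse inclusion I would transport everything to the disc by the Cayley transform $\mathfrak{C}(w)=(w-i)/(w+i)$, which sends $\infty$ to $1\in\mathbb{T}$: with $g=\psi\circ\mathfrak{C}^{-1}\in H^{\infty}(\mathbb{D})$ one has $\mathcal{C}_{\infty}(\psi)=\mathcal{C}_{1}(g)$ and $\mathcal{R}_{\infty}(\psi)=\mathcal{R}_{1}(g^{*})$, while $g^{*}$ lies in $QC(\mathbb{T})\cap H^{\infty}(\mathbb{T})$ (this is how $QC(\mathbb{R})$ is transported to the circle, and $g^{*}$ is simultaneously the boundary function of the bounded analytic $g$). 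Since $QC(\mathbb{T})$ is a closed $*$-subalgebra of $L^{\infty}(\mathbb{T})$ containing $C(\mathbb{T})$, Proposition~\ref{prop4} applies with $A=QC(\mathbb{T})$ and identifies $\mathcal{R}_{1}(g^{*})$ with $\widehat{g^{*}}(M_{1}(QC))$, the range of the Gelfand transform of $g^{*}$ over the fibre of $M(QC)$ above $1$. Thus it will be enough to exhibit, for each $z\in\mathcal{C}_{1}(g)$, a character $\chi\in M_{1}(QC)$ with $\chi(g^{*})=z$.

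To produce $\chi$ I would take a sequence $w_{n}\to1$ in $\mathbb{D}$ with $g(w_{n})\to z$ and consider the Poisson-integral evaluations $f\mapsto(P[f])(w_{n})$, which are states on $QC$; by weak*-compactness of the unit ball of $QC^{*}$ some subnet converges weak* to a state $\chi$. The key point is that the Poisson evaluations are \emph{asymptotically multiplicative} on $QC$ as $w\to1$: using Sarason's inclusion $QC\subseteq H^{\infty}+C(\mathbb{T})$ and splitting each factor as an $H^{\infty}$ part plus a $C(\mathbb{T})$ part, the term that is a product of two $H^{\infty}$ factors is handled exactly, because there the Poisson extension coincides with the analytic extension and is genuinely multiplicative, while in any term containing a $C(\mathbb{T})$ factor that factor may be replaced by its value at $1$ up to an error that tends to $0$ since the Poisson mass escapes every fixed neighbourhood of $1$; hence $P[fh](w)-P[f](w)P[h](w)\to0$ as $w\to1$ for $f,h\in H^{\infty}+C$. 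Consequently $\chi$ is a unital $*$-homomorphism, i.e. $\chi\in M(QC)$; evaluating on the coordinate function shows that $\chi$ restricted to $C(\mathbb{T})$ is evaluation at $1$, so $\chi\in M_{1}(QC)$; and since $g^{*}\in H^{\infty}(\mathbb{T})$ its Poisson extension is $g$, whence $\chi(g^{*})=\lim(P[g^{*}])(w_{n})=\lim g(w_{n})=z$. This places $z$ in $\widehat{g^{*}}(M_{1}(QC))=\mathcal{R}_{1}(g^{*})$, and undoing the Cayley transform finishes the proof.

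I expect the main obstacle to be the asymptotic-multiplicativity step — equivalently, the fact that weak* limits of Poisson evaluations along sequences tending to $1$ are genuine characters in the fibre $M_{1}(QC)$ — which is in essence Sarason's structure theory for $QC$; it is here, and only here, that $\psi\in QC$ is indispensable. Without it the inclusion genuinely fails: for $\psi(z)=e^{iz}$ on $\mathbb{H}$ one has $\mathcal{C}_{\infty}(\psi)=\overline{\mathbb{D}}$ while $|\psi^{*}|=1$ a.e., so $\mathcal{R}_{\infty}(\psi)=\mathbb{T}$. A more computational route would instead work directly with the Poisson kernel and the vanishing mean oscillation of $g^{*}$, showing first that the averages of $g^{*}$ over arcs shrinking to $1$ converge to $z$ and then that $\{|g^{*}-z|<\varepsilon\}$ meets every such arc in positive measure; but balancing the concentration of the Poisson kernel against an a priori arbitrarily slow modulus of mean oscillation makes that route noticeably more delicate, so I would favour the Banach-algebra argument above.
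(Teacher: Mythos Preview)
The paper does not actually prove this lemma: after the statement it writes only ``See [Gul] for a proof of this lemma.'' So there is no argument here to compare your proposal against.

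On its own merits your proposal is sound. The inclusion $\mathcal{R}_{\infty}(\psi)\subseteq\mathcal{C}_{\infty}(\psi)$ is handled correctly via Fatou's theorem, and for the reverse inclusion your reduction to Proposition~\ref{prop4} with $A=QC(\mathbb{T})$ is exactly the right move, since that proposition identifies $\mathcal{R}_{1}(g^{*})$ with $\widehat{g^{*}}(M_{1}(QC))$. The construction of the character $\chi$ as a weak$^{*}$ accumulation point of Poisson evaluations is also correct: your asymptotic--multiplicativity computation on $H^{\infty}+C(\mathbb{T})$ goes through term by term (the $H^{\infty}\cdot H^{\infty}$ piece is exactly multiplicative because the Poisson and analytic extensions coincide, and every term containing a $C(\mathbb{T})$ factor is handled by concentration of the Poisson kernel at the boundary point), and passing to a convergent subnet of states yields a genuine $*$-character in the fibre $M_{1}(QC)$. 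Since $g^{*}\in H^{\infty}$, its Poisson extension is $g$, so $\chi(g^{*})=\lim g(w_{n})=z$ along the subnet as you say. Your counterexample $\psi(z)=e^{iz}$ showing the $QC$ hypothesis is essential is also correct. In short, your argument is complete; the paper simply defers to the earlier reference rather than reproducing any of this.
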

See [Gul] for a proof of this lemma.

 We finish this section by recalling an
elementary geometric lemma from [Gul] which we will use in the
next section:
\begin{lemma}
    Let $K\subset$ $\mathbb{H}$ be a compact
    subset of $\mathbb{H}$. Then there is an $\alpha\in$ $\mathbb{R}^{+}$
    such that $\sup\{\mid\frac{\alpha i-z}{\alpha}\mid:z\in K\}<$
    $\delta < 1$ for some $\delta\in$ $(0,1)$.
    \label{lem1}
\end{lemma}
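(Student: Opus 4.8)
The plan is to use the compactness of $K$ to extract a uniform bound on $\Im(z)$ from below and on $|z|$ from above, and then to choose $\alpha$ large enough that the quantity $\left|\frac{\alpha i - z}{\alpha}\right|$ stays strictly below $1$ for every $z \in K$. First I would note that since $K$ is a compact subset of the open upper half-plane $\mathbb{H}$, the continuous function $z \mapsto \Im(z)$ attains a positive minimum $m > 0$ on $K$, and the continuous function $z \mapsto |z|$ attains a finite maximum $M < \infty$ on $K$. Thus for all $z \in K$ we have $\Im(z) \geq m > 0$ and $|z| \leq M$.

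Next I would compute $\left|\frac{\alpha i - z}{\alpha}\right|^{2} = \frac{|\alpha i - z|^{2}}{\alpha^{2}}$. Writing $z = x + iy$, we have $|\alpha i - z|^{2} = x^{2} + (\alpha - y)^{2} = x^{2} + y^{2} + \alpha^{2} - 2\alpha y = |z|^{2} + \alpha^{2} - 2\alpha \Im(z)$. Hence
\begin{equation*}
\left|\frac{\alpha i - z}{\alpha}\right|^{2} = 1 + \frac{|z|^{2} - 2\alpha \Im(z)}{\alpha^{2}} \leq 1 + \frac{M^{2} - 2\alpha m}{\alpha^{2}}.
\end{equation*}
The numerator $M^{2} - 2\alpha m$ becomes negative as soon as $\alpha > M^{2}/(2m)$, so for such $\alpha$ the right-hand side is strictly less than $1$. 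Fixing any such $\alpha$, set $\delta := \sqrt{1 + (M^{2} - 2\alpha m)/\alpha^{2}} \in (0,1)$; then $\sup\{\left|\frac{\alpha i - z}{\alpha}\right| : z \in K\} \leq \delta < 1$, which is exactly the claim.

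There is essentially no obstacle here: the only point requiring care is verifying that the supremum over $K$ of $|z|^{2} - 2\alpha\Im(z)$ is genuinely bounded above by $M^{2} - 2\alpha m$ uniformly, which follows at once from the two elementary bounds $|z| \leq M$ and $\Im(z) \geq m$ holding simultaneously on the compact set $K$. The lemma is purely a packaging of the fact that $K$ stays a fixed positive distance away from the real axis while remaining bounded, so that a sufficiently high point $\alpha i$ on the imaginary axis "sees" all of $K$ inside the disc of radius $\alpha$ centered at $\alpha i$ — indeed inside a slightly smaller disc, which gives the strict inequality.
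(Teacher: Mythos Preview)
Your proof is correct. The paper itself does not prove this lemma but simply cites [Gul], calling it an ``elementary geometric lemma''; your argument---extracting uniform bounds $\Im(z)\geq m>0$ and $|z|\leq M$ from compactness, then expanding $|\alpha i - z|^{2}=|z|^{2}+\alpha^{2}-2\alpha\Im(z)$ and choosing $\alpha>M^{2}/(2m)$---is precisely the kind of direct elementary computation one would expect behind that description, so there is nothing to contrast.
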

See [Gul] for a proof of this lemma.

\section{an approximation scheme for quasi-parabolic composition
operators on hardy spaces of the bi-disc}

This section is a generalization of sec. 3 of [Gul] to bi-disc. As
in [Gul] we devise an integral representation formula for
composition operators and we develop an approximation scheme using
this integral formula for composition operators induced by maps of
the form $\varphi:\mathbb{H}^{2}\rightarrow\mathbb{H}^{2}$
$$\varphi(z_{1},z_{2})=(p_{1}z_{1}+\psi_{1}(z_{1},z_{2}),p_{2}z_{2}+\psi(z_{1},z_{2}))$$
where $p_{i}>0$, $i=1,2$ and $\psi_{i}\in
H^{\infty}(\mathbb{H}^{2})$ such that
$\Im(\psi_{i}(z_{1},z_{2}))>\epsilon>0$, $\forall
(z_{1},z_{2})\in\mathbb{H}^{2}$. Boundedness of composition
operators induced by such kind of mappings above is not trivial.
In order to show that quasi-parabolic composition operators on
$H^{2}(\mathbb{H}^{2})$ are bounded we will use the following
lemma due to Jafari (see [Jaf] pp.872):
\begin{lemma}
Suppose $\varphi:\mathbb{H}^{n}\rightarrow\mathbb{H}^{n}$ is
holomorphic and $C_{\varphi}$ is bounded(compact) on a dense
subset of $H^{p}(\mathbb{H}^{n})$ for $1<p<\infty$. Then
$C_{\varphi}$ is bounded(compact)
\end{lemma}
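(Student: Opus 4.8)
The plan is to run the standard density-and-extension argument; the only substantive point is to identify the abstract bounded extension with the genuine composition operator, and this is where the function theory of $H^{p}$ on the half-plane enters.

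First I would fix the dense subspace $D\subseteq H^{p}(\mathbb{H}^{n})$ on which $C_{\varphi}$ is assumed bounded, so that there is a constant $M>0$ with $\|f\circ\varphi\|_{p}\le M\|f\|_{p}$ for every $f\in D$. Since $C_{\varphi}|_{D}$ is linear and $D$ is dense, it extends uniquely to an operator $T\in B(H^{p}(\mathbb{H}^{n}))$ with $\|T\|\le M$: given $g\in H^{p}(\mathbb{H}^{n})$, pick $f_{k}\in D$ with $f_{k}\to g$ in $H^{p}$; then $\|f_{j}\circ\varphi-f_{k}\circ\varphi\|_{p}\le M\|f_{j}-f_{k}\|_{p}$ shows $(f_{k}\circ\varphi)_{k}$ is Cauchy, and I set $Tg=\lim_{k}f_{k}\circ\varphi$. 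The next step is to show $Tg=g\circ\varphi$ pointwise on $\mathbb{H}^{n}$, which in particular yields $g\circ\varphi\in H^{p}(\mathbb{H}^{n})$ and $\|g\circ\varphi\|_{p}\le M\|g\|_{p}$, i.e. that $C_{\varphi}$ is bounded. For this I would use two facts: (i) for each compact $L\subset\mathbb{H}^{n}$ the point evaluations $\{h\mapsto h(z):z\in L\}$ are uniformly bounded functionals on $H^{p}(\mathbb{H}^{n})$ (this comes from the analogous estimate on $H^{p}(\mathbb{D}^{n})$ carried over by the isometry $\Phi$), so $H^{p}$-convergence implies uniform convergence on compact subsets of the interior $\mathbb{H}^{n}$; and (ii) since $\varphi$ is holomorphic from $\mathbb{H}^{n}$ into $\mathbb{H}^{n}$, it maps compact subsets of $\mathbb{H}^{n}$ to compact subsets of $\mathbb{H}^{n}$. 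Combining these, $f_{k}\to g$ locally uniformly on $\mathbb{H}^{n}$ forces $f_{k}\circ\varphi\to g\circ\varphi$ locally uniformly on $\mathbb{H}^{n}$, while (i) applied to $(f_{k}\circ\varphi)_{k}$ and its $H^{p}$-limit gives $f_{k}\circ\varphi\to Tg$ locally uniformly as well; hence $g\circ\varphi=Tg$ on $\mathbb{H}^{n}$.

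For the compactness assertion, once $C_{\varphi}$ is known to be bounded I would invoke the elementary Banach-space fact that a bounded operator $T$ on $X$ is compact as soon as its restriction to a dense subspace $D$ has relatively compact image of the unit ball $B_{D}$: for $x\in B_{X}$ and $\eps>0$ choose $d\in D$ with $\|x-d\|<\eps$; after the harmless renormalization $d/(1+\eps)\in B_{D}$ one finds $Tx$ within distance $\mathcal{O}(M\eps)$ of $\overline{T(B_{D})}$, and letting $\eps\to0$ shows $\overline{T(B_{X})}$ is totally bounded, hence compact. Applying this with $T=C_{\varphi}$ and the hypothesis that $C_{\varphi}|_{D}$ is compact gives the compact case. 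The main obstacle is step (i): the uniform local boundedness of point evaluations on $H^{p}(\mathbb{H}^{n})$, which is the one place where the specific geometry of the half-plane (through the polydisc and $\Phi$, or directly via a Poisson/Cauchy-type estimate) is needed; everything else is routine.
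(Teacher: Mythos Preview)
Your proposal is correct: the density-and-extension argument, with the identification of the abstract extension $T$ with $g\circ\varphi$ via local uniform convergence (guaranteed by the continuity of point evaluations on $H^{p}(\mathbb{H}^{n})$), is exactly the right mechanism, and your compactness argument is the standard totally-bounded approximation.

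The paper, however, does not give its own proof of this lemma. It attributes the statement to Jafari [Jaf] (who proved it for the polydisc $\mathbb{D}^{n}$) and simply remarks that ``his proof carries over to our case in exactly the same manner as he does it for the poly-disc.'' So you have supplied a genuine proof where the paper only gestures at one. Your argument is in substance what Jafari's proof amounts to: the only domain-dependent ingredient is the local boundedness of point evaluations, which you correctly isolate as step~(i) and which transfers from $\mathbb{D}^{n}$ to $\mathbb{H}^{n}$ via the Cayley isomorphism $\Phi$ (or directly from a Poisson/Cauchy estimate). There is nothing to compare beyond this, since the paper's ``proof'' is a citation.
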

Although Jafari states and proves this lemma for the poly-disc
$\mathbb{D}^{n}$, his proof carries over to our case in exactly
the same manner as he does it for the poly-disc. Using lemma 5 we
prove the following result:
\begin{proposition}
Let
$\varphi(z_{1},z_{2})=(p_{1}z_{1}+\psi_{1}(z_{1},z_{2}),p_{2}z_{2}+\psi_{2}(z_{1},z_{2}))$
be an analytic self-map of $\mathbb{H}^{2}$ into itself such that
$\psi_{j}\in H^{\infty}(\mathbb{H}^{2})$, $p_{j}>0$, and
$\Im(\psi_{j}(z_{1},z_{2}))>\delta>0$ for all
$(z_{1},z_{2})\in\mathbb{H}^{2}$ where $j=1,2$. Then $C_{\varphi}$
is bounded on $H^{2}(\mathbb{H}^{2})$.
\end{proposition}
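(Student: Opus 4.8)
The plan is to prove that $C_{\varphi}$ is bounded on the dense subspace $\mathcal{D}\subset H^{2}(\mathbb{H}^{2})$ consisting of those $f$ whose Bochner--Fourier transform $\mathcal{F}f$ has compact support in $(\mathbb{R}^{+})^{2}$, and then to invoke the lemma of Jafari quoted above to conclude boundedness on all of $H^{2}(\mathbb{H}^{2})$. For $f\in\mathcal{D}$, inverting Bochner's theorem gives the holomorphic representation
\[
f(z_{1},z_{2})=\frac{1}{2\pi}\int_{(\mathbb{R}^{+})^{2}}(\mathcal{F}f)(t_{1},t_{2})\,e^{i(z_{1}t_{1}+z_{2}t_{2})}\,dt_{1}\,dt_{2},
\]
so that, since $\varphi_{j}(z)=p_{j}z_{j}+\psi_{j}(z)$,
\[
(C_{\varphi}f)(z)=\frac{1}{2\pi}\int_{(\mathbb{R}^{+})^{2}}(\mathcal{F}f)(t)\,e^{ip_{1}z_{1}t_{1}}\,e^{ip_{2}z_{2}t_{2}}\,e^{i\psi_{1}(z)t_{1}}\,e^{i\psi_{2}(z)t_{2}}\,dt .
\]
Since $\psi_{j}\in H^{\infty}(\mathbb{H}^{2})$ and $\Im\psi_{j}>\delta>0$, the closure of $\psi_{j}(\mathbb{H}^{2})$ is a compact subset of $\mathbb{H}$ (it is bounded and has imaginary part $\ge\delta$); Lemma~\ref{lem1} then provides $\alpha_{j}\in\mathbb{R}^{+}$ and $\delta_{j}\in(0,1)$ such that, writing $u_{j}:=\psi_{j}-i\alpha_{j}\in H^{\infty}(\mathbb{H}^{2})$, one has $\|u_{j}\|_{\infty}\le\delta_{j}\alpha_{j}$.

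Next I would substitute $\psi_{j}=i\alpha_{j}+u_{j}$ into the exponentials, giving $e^{i\psi_{j}(z)t_{j}}=e^{-\alpha_{j}t_{j}}\sum_{n\ge0}\frac{i^{n}}{n!}u_{j}(z)^{n}t_{j}^{n}$. On the compact support of $\mathcal{F}f$ the resulting double power series in $(t_{1},t_{2})$ converges uniformly, so one may interchange it with the integral and obtain
\[
C_{\varphi}f=\sum_{n_{1},n_{2}\ge0}\frac{i^{\,n_{1}+n_{2}}}{n_{1}!\,n_{2}!}\;M_{u_{1}^{n_{1}}u_{2}^{n_{2}}}\;C_{\rho}\;D_{\vartheta_{n_{1},n_{2}}}\,f ,
\]
where $\rho(z_{1},z_{2})=(p_{1}z_{1},p_{2}z_{2})$, $M_{g}$ denotes multiplication by $g$, and $D_{\vartheta_{n_{1},n_{2}}}=\mathcal{F}^{-1}M_{\vartheta_{n_{1},n_{2}}}\mathcal{F}$ is the Fourier multiplier with the bounded symbol $\vartheta_{n_{1},n_{2}}(t_{1},t_{2})=t_{1}^{\,n_{1}}e^{-\alpha_{1}t_{1}}\,t_{2}^{\,n_{2}}e^{-\alpha_{2}t_{2}}$. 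The three factors are all bounded on $H^{2}(\mathbb{H}^{2})$: a change of variables on the distinguished boundary gives $\|C_{\rho}\|=(p_{1}p_{2})^{-1/2}$; since $u_{1}^{n_{1}}u_{2}^{n_{2}}\in H^{\infty}(\mathbb{H}^{2})$ we get $\|M_{u_{1}^{n_{1}}u_{2}^{n_{2}}}\|=\|u_{1}^{n_{1}}u_{2}^{n_{2}}\|_{\infty}\le(\delta_{1}\alpha_{1})^{n_{1}}(\delta_{2}\alpha_{2})^{n_{2}}$; and $\|D_{\vartheta_{n_{1},n_{2}}}\|=\|\vartheta_{n_{1},n_{2}}\|_{\infty}=(n_{1}/\alpha_{1}e)^{n_{1}}(n_{2}/\alpha_{2}e)^{n_{2}}$, because $s\mapsto s^{n}e^{-\alpha s}$ attains the maximum $(n/\alpha e)^{n}$ on $(0,\infty)$ (with the convention $0^{0}=1$).

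Multiplying these estimates, the $\alpha_{j}$'s cancel and the operator norm of the $(n_{1},n_{2})$-th term is at most $(p_{1}p_{2})^{-1/2}\prod_{j=1}^{2}\dfrac{\delta_{j}^{\,n_{j}}(n_{j}/e)^{n_{j}}}{n_{j}!}$. Stirling's inequality $n!\ge\sqrt{2\pi n}\,(n/e)^{n}$ gives $(n/e)^{n}/n!\le(2\pi n)^{-1/2}$ for $n\ge1$, whence
\[
\sum_{n_{1},n_{2}\ge0}(p_{1}p_{2})^{-1/2}\prod_{j=1}^{2}\frac{\delta_{j}^{\,n_{j}}(n_{j}/e)^{n_{j}}}{n_{j}!}\ \le\ (p_{1}p_{2})^{-1/2}\prod_{j=1}^{2}\Bigl(1+\sum_{n\ge1}\frac{\delta_{j}^{\,n}}{\sqrt{2\pi n}}\Bigr)<\infty
\]
because $\delta_{j}<1$. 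Thus the series for $C_{\varphi}$ converges in operator norm to a bounded operator on $H^{2}(\mathbb{H}^{2})$ that agrees with $C_{\varphi}$ on $\mathcal{D}$; since $\mathcal{D}$ is dense, Jafari's lemma gives that $C_{\varphi}$ is bounded. The only real difficulty is that the naive Taylor expansion of $e^{i\psi_{j}(z)t_{j}}$ in powers of $t_{j}$ has the unbounded symbols $t_{j}^{n}$ as Fourier multipliers; expanding instead around $i\alpha_{j}$ converts these into the bounded symbols $t_{j}^{n}e^{-\alpha_{j}t_{j}}$ while at the same time forcing the multiplication coefficients $u_{j}^{n}$ to have sup-norm $\le\delta_{j}^{\,n}\alpha_{j}^{\,n}$ with $\delta_{j}<1$, and this geometric gain coming from Lemma~\ref{lem1} is precisely what makes the Stirling factor $(n/e)^{n}/n!=O(n^{-1/2})$ summable.
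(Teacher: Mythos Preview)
Your proof is correct but takes a genuinely different route from the paper's. The paper's argument for this proposition is a short reproducing-kernel computation: after reducing to $p_{1}=p_{2}=1$ it observes that $C_{\varphi}^{*}k_{w_{1},w_{2}}=k_{\varphi(w_{1},w_{2})}$ and that $\|k_{\varphi(w)}\|\le\|k_{w}\|$ because $\Im\varphi_{j}(w)\ge\Im w_{j}+\delta$, and then invokes density of the kernel span together with Jafari's lemma. Your approach instead expands $C_{\varphi}$ on the Paley--Wiener dense subspace as an operator-norm convergent series $\sum M_{u_{1}^{n_{1}}u_{2}^{n_{2}}}C_{\rho}D_{\vartheta_{n_{1},n_{2}}}$, using Lemma~\ref{lem1} to turn the Fourier-multiplier symbols into bounded ones and Stirling to make the resulting series summable. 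This is essentially the approximation scheme of the paper's Proposition~\ref{prop3}, but carried out \emph{before} boundedness is known: whereas the paper's Proposition~\ref{prop3} bounds its remainder terms $R_{N_{1},N_{2}}$ using $\|C_{\varphi}\|$ (and thus relies on the present proposition), you bound each term of the series directly and sum. What you gain is an explicit operator-norm estimate and a fully self-contained argument; what the paper's kernel argument gains is brevity, though one might note that the step from $\|C_{\varphi}^{*}k_{w}\|\le\|k_{w}\|$ for individual kernels to boundedness on finite linear combinations is not spelled out there.
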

\begin{proof}
Without loss of generality we may take $p_{1}=p_{2}=1$, for
otherwise we consider the operator $C_{\tilde{\varphi}}$ instead
of $C_{\varphi}$ where
$$\tilde{\varphi}(z_{1},z_{2})=(z_{1}+\psi_{1}(\frac{z_{1}}{p_{1}},\frac{z_{2}}{p_{2}}),z_{2}+\psi_{2}(\frac{z_{1}}{p_{1}},\frac{z_{2}}{p_{2}})).$$
We observe that $V_{p_{1},p_{2}}C_{\tilde{\varphi}}=C_{\varphi}$
where $V_{p_{1},p_{2}}(f)(z_{1},z_{2})=f(p_{1}z_{1},p_{2}z_{2})$.
Since $V_{p_{1},p_{2}}$ is invertible $C_{\varphi}$ is bounded if
and only if $C_{\tilde{\varphi}}$ is bounded.

The Hilbert space $H^{2}(\mathbb{H}^{2})$ is a reproducing kernel
Hilbert space with reproducing kernel functions
$$k_{w_{1},w_{2}}(z_{1},z_{2})=\frac{1}{(2i)^{2}(\overline{w_{1}}-z_{1})(\overline{w_{2}}-z_{2})}$$
we observe that
$$C_{\varphi}^{*}(k_{w_{1},w_{2}})=k_{\varphi(w_{1},w_{2})}$$ and
that
\begin{equation}
\parallel k_{w_{1},w_{2}}\parallel=\frac{1}{\Im(w_{1})\Im(w_{2})}
\end{equation}
 where
$C_{\varphi}^{*}$ is the Hilbert space adjoint of $C_{\varphi}$.
Let
$E=\{\sum_{j=1}^{n}c_{j}k_{w_{1j},w_{2j}}:c_{j}\in\mathbb{C}\}$
then it is clear that $E$ is dense in $H^{2}(\mathbb{H}^{2})$.
Observe that by equation (10) we have
\begin{eqnarray*}
 & &\parallel C_{\varphi}^{*}k_{w_{1},w_{2}}\parallel=\frac{1}{\Im(\varphi_{1}(w_{1},w_{2}))\Im(\varphi_{2}(w_{1},w_{2}))}\\
 & &\leq\left(\frac{\Im(w_{1})}{\Im(w_{1})+\delta}\right)\left(\frac{\Im(w_{2})}{\Im(w_{2})+\delta}\right)\left(\frac{1}{\Im(w_{1})\Im(w_{2})}\right)\leq\parallel
k_{w_{1},w_{2}}\parallel
\end{eqnarray*}
for all $(w_{1},w_{2})\in\mathbb{H}^{2}$, since
$\Im(\psi_{1}(w_{1},w_{2}))>\delta>0$ where
$$\varphi(w_{1},w_{2})=(\varphi_{1}(w_{1},w_{2}),\varphi_{2}(w_{1},w_{2}).$$
Hence $C_{\varphi}^{*}$ is bounded on $E$ and since
$$\mid\langle C_{\varphi}u,v\rangle\mid=\mid\langle
u,C_{\varphi}^{*}v\rangle\mid=\mid\overline{\langle
C_{\varphi}^{*}v,u\rangle}\mid=\mid\langle
C_{\varphi}^{*}v,u\rangle\mid\leq C\parallel u\parallel\parallel
v\parallel$$ for some $C>0$ and for all $u,v\in
H^{2}(\mathbb{H}^{2})$, $C_{\varphi}$ is also bounded on $E$.
Since $E$ is dense in $H^{2}(\mathbb{H}^{2})$, by lemma 5
$C_{\varphi}$ is bounded on $H^{2}(\mathbb{H}^{2})$.
\end{proof}

 Like in one variable
case, for any $f\in H^{2}(\mathbb{H}^{2})$ we have the following
Cauchy Integral formula
\begin{equation}
f(z_{1},z_{2})=\frac{1}{(2\pi
i)^{2}}\int_{-\infty}^{\infty}\int_{-\infty}^{\infty}\frac{f^{*}(x_{1},x_{2})dx_{1}dx_{2}}{(x_{1}-z_{1})(x_{2}-z_{2})}
\end{equation}
Using this integral formula we prove the following proposition
\begin{proposition}
Let $\varphi:\mathbb{H}^{2}\rightarrow\mathbb{H}^{2}$ be an
analytic function such that for
$$\varphi^{*}(\mathbf{x})=\lim_{y\rightarrow 0}\varphi(\mathbf{x}+i\mathbf{y})$$
where $\mathbf{y}=(y,y)\in\mathbb{R}^{2}$ and
$\varphi^{*}(x_{1},x_{2})=(\varphi_{1}(x_{1},x_{2}),\varphi_{2}(x_{1},x_{2}))$
we have $\Im(\varphi_{i}^{*}(x_{1},x_{2}))>0$ for almost
every\quad $\mathbf{x}=(x_{1},x_{2})\in\mathbb{R}^{2}$. Then the
composition operator $C_{\varphi}$ on $H^{2}(\mathbb{H}^{2})$ is
given by
\begin{eqnarray*}
& &(C_{\varphi}f)^{*}(x_{1},x_{2})=\lim_{y\rightarrow
0}(C_{\varphi})(\mathbf{x}+i\mathbf{y})\\
=& &\frac{1}{(2\pi
i)^{2}}\int_{-\infty}^{\infty}\int_{-\infty}^{\infty}\frac{f^{*}(t_{1},t_{2})dt_{1}dt_{2}}{(t_{1}-\varphi_{1}^{*}(\mathbf{x}))(t_{2}-\varphi_{2}^{*}(\mathbf{x}))}.
\end{eqnarray*}
\end{proposition}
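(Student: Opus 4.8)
The plan is to reduce the two–variable Cauchy integral formula to a pointwise limit statement and then justify the interchange of the limit $y\to 0$ with the double integral. First I would write, for $(z_1,z_2)=(x_1+iy,x_2+iy)\in\mathbb{H}^2$ with $y>0$, the composition $C_\varphi f(z_1,z_2)=f(\varphi_1(z_1,z_2),\varphi_2(z_1,z_2))$, and apply the Cauchy integral formula (11) to $f$ evaluated at the point $(\varphi_1(z_1,z_2),\varphi_2(z_1,z_2))\in\mathbb{H}^2$; this is legitimate because $\varphi$ maps $\mathbb{H}^2$ into $\mathbb{H}^2$, so $\Im(\varphi_j(z_1,z_2))>0$ for $y>0$ and the kernel $1/((t_1-\varphi_1(z_1,z_2))(t_2-\varphi_2(z_1,z_2)))$ is a genuine $H^2$ reproducing-type kernel. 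This gives, for every $y>0$,
\begin{equation*}
(C_\varphi f)(\mathbf{x}+i\mathbf{y})=\frac{1}{(2\pi i)^2}\int_{-\infty}^{\infty}\int_{-\infty}^{\infty}\frac{f^{*}(t_1,t_2)\,dt_1\,dt_2}{(t_1-\varphi_1(\mathbf{x}+i\mathbf{y}))(t_2-\varphi_2(\mathbf{x}+i\mathbf{y}))}.
\end{equation*}

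Next I would pass to the limit $y\to 0^{+}$ on both sides. On the left, by the Fatou-type theorem recalled in Section 1 (the embedding $f\mapsto f^{*}$ via $\Phi$), the nontangential limit $\lim_{y\to 0}(C_\varphi f)(\mathbf{x}+i\mathbf{y})$ exists for a.e. $\mathbf{x}$ and equals $(C_\varphi f)^{*}(\mathbf{x})$; this uses that $C_\varphi$ is bounded on $H^2(\mathbb{H}^2)$, which is exactly Proposition 6. On the right, by hypothesis $\varphi_j(\mathbf{x}+i\mathbf{y})\to\varphi_j^{*}(\mathbf{x})$ with $\Im(\varphi_j^{*}(\mathbf{x}))>0$ for a.e. $\mathbf{x}$, so for such fixed $\mathbf{x}$ the integrand converges pointwise in $(t_1,t_2)$ to $f^{*}(t_1,t_2)/((t_1-\varphi_1^{*}(\mathbf{x}))(t_2-\varphi_2^{*}(\mathbf{x})))$. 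The goal is to invoke dominated convergence: once $y$ is small enough that $\Im(\varphi_j(\mathbf{x}+i\mathbf{y}))\geq c(\mathbf{x})>0$ and $|\Re(\varphi_j(\mathbf{x}+i\mathbf{y}))|\leq R(\mathbf{x})$, the kernels are dominated by $|f^{*}(t_1,t_2)|\cdot g_1(t_1)g_2(t_2)$ where $g_j(t)=(|t|-R(\mathbf{x}))^{-1}$ for $|t|>2R(\mathbf{x})$ and a constant near the origin; since $f^{*}\in L^2(\mathbb{R}^2)$ and $g_j\in L^2$ away from $\pm R(\mathbf{x})$, Cauchy–Schwarz (in each variable) gives an integrable dominating function. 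Hence the right-hand side converges to the claimed double integral for a.e. $\mathbf{x}$, and equating the two limits finishes the proof.

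The main obstacle I anticipate is the uniformity needed for the dominated convergence argument: the bounds $c(\mathbf{x})$ and $R(\mathbf{x})$ on the imaginary and real parts of $\varphi_j(\mathbf{x}+i\mathbf{y})$ depend on $\mathbf{x}$ and, a priori, one only controls them in a limiting sense as $y\to 0$. One must be slightly careful to first fix a representative $\mathbf{x}$ in the full-measure set where all the relevant nontangential limits exist, and only then choose the threshold in $y$ depending on that $\mathbf{x}$; this is harmless since the final identity is claimed only almost everywhere. A secondary technical point is checking that the right-hand integral, viewed as a function of $\mathbf{x}$, indeed reproduces the analytic extension correctly — i.e., that no boundary contribution is lost in the limit — but this is automatic from the two-variable Cauchy formula (11) applied at interior points together with the $H^2$ membership of $C_\varphi f$ guaranteed by Proposition 6. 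The one-variable special case of this argument appears in [Gul, Sec. 3], and the bi-disc case goes through with the obvious product modifications, applying Fubini to split the double integral into iterated one-variable estimates.
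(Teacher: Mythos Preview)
Your approach is essentially the paper's: both start from the Cauchy integral representation at interior points and pass to the limit $y\to 0$ by controlling the kernel uniformly in $L^2$ for small $y$; the paper subtracts the two integrals and extracts a factor $|\varphi_j(\mathbf{x}+i\mathbf{y})-\varphi_j^*(\mathbf{x})|$ multiplying a bounded integral, while you package the same uniform bound as dominated convergence. One minor point: your appeal to Proposition~6 (and Fatou) to guarantee existence of $(C_\varphi f)^*$ on the left is both unnecessary and not strictly applicable here, since Proposition~7 is stated for general analytic $\varphi:\mathbb{H}^2\to\mathbb{H}^2$ with $\Im(\varphi_j^*)>0$ a.e., not only for the quasi-parabolic maps covered by Proposition~6. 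You can simply drop that step: once your dominated-convergence argument shows the right-hand integral converges as $y\to 0$, the left-hand side, being equal to it for every $y>0$, converges to the same limit automatically, which is exactly how the paper proceeds.
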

\begin{proof} By the equation (11) above one has
\begin{equation*}
(C_{\varphi}f)(\mathbf{x}+i\mathbf{y})=\frac{1}{(2\pi
i)^{2}}\int_{-\infty}^{\infty}\int_{-\infty}^{\infty}\frac{f^{*}(t_{1},t_{2})dt_{1}dt_{2}}{(t_{1}-\varphi_{1}(\mathbf{x}+i\mathbf{y}))(t_{2}-\varphi_{2}(\mathbf{x}+i\mathbf{y}))}
\end{equation*}
where $\mathbf{x}=(x_{1},x_{2})\in\mathbb{R}^{2}$ and
$\mathbf{y}=(y,y)\in\mathbb{R}^{2}$. Let
$\mathbf{x}=(x_{1},x_{2})\in\mathbb{R}^{2}$ be such that
$\lim_{y\rightarrow
    0}\varphi(\mathbf{x}+i\mathbf{y})=$ $\varphi^{*}(\mathbf{x})=$ $(\varphi_{1}^{*}(\mathbf{x}),\varphi_{2}^{*}(\mathbf{x}))$ exists and $\Im(\varphi_{j}^{*}(\mathbf{x}))>
    0$, $j=1,2$. We have
\begin{eqnarray}
        & &\left\lvert C_{\varphi}(f)(\mathbf{x}+i\mathbf{y})-\frac{1}{(2\pi i)^{2}}\int_{-\infty}^{\infty}\int_{-\infty}^{\infty}\frac{f^{*}(t_{1},t_{2})dt_{1}dt_{2}}{(t_{1}-\varphi_{1}^{*}(\mathbf{x}))(t_{2}-\varphi_{2}^{*}(\mathbf{x}))}\right\rvert\\
        =&\nonumber &\left\lvert\frac{1}{(2\pi i)^{2}}\bigg(\int_{\mathbb{R}^{2}}\frac{f^{*}(t_{1},t_{2})dt_{1}dt_{2}}{\prod_{j=1}^{2}(t_{j}-\varphi_{j}(\mathbf{x}+i\mathbf{y}))}-\int_{\mathbb{R}^{2}}\frac{f^{*}(t_{1},t_{2})dt_{1}dt_{2}}{\prod_{j=1}^{2}(t_{j}-\varphi_{j}^{*}(\mathbf{x}))}\bigg)\right\rvert\\
        =&\nonumber &\left\lvert\frac{1}{(2\pi i)^{2}}\int_{\mathbb{R}^{2}}\bigg(\frac{1}{\prod_{j=1}^{2}(t_{j}-\varphi_{j}(\mathbf{x}+i\mathbf{y}))}-\frac{1}{\prod_{j=1}^{2}(t_{j}-\varphi_{j}^{*}(\mathbf{x}))}\bigg)f^{*}(\mathbf{t})d\mathbf{t}\right\rvert
\end{eqnarray}
Consider
\begin{eqnarray*}
       & &\frac{1}{(t_{1}-\varphi_{1}(\mathbf{x}+i\mathbf{y}))(t_{2}-\varphi_{2}(\mathbf{x}+i\mathbf{y}))}-\frac{1}{(t_{1}-\varphi_{1}^{*}(\mathbf{x}))(t_{2}-\varphi_{2}^{*}(\mathbf{x}))}\\
       =& &\frac{1}{(t_{1}-\varphi_{1}(\mathbf{x}+i\mathbf{y}))(t_{2}-\varphi_{2}(\mathbf{x}+i\mathbf{y}))}-\frac{1}{(t_{1}-\varphi_{1}^{*}(\mathbf{x}+i\mathbf{y}))(t_{2}-\varphi_{2}^{*}(\mathbf{x}))}\\
       +& &\frac{1}{(t_{1}-\varphi_{1}(\mathbf{x}+i\mathbf{y}))(t_{2}-\varphi_{2}(\mathbf{x}))}-\frac{1}{(t_{1}-\varphi_{1}^{*}(\mathbf{x}))(t_{2}-\varphi_{2}^{*}(\mathbf{x}))}\\
       =& &\frac{\varphi_{2}(x_{1}+iy,x_{2}+iy)-\varphi_{2}^{*}(x_{1},x_{2})}{(t_{1}-\varphi_{1}(\mathbf{x}+i\mathbf{y}))(t_{2}-\varphi_{2}(\mathbf{x}))(t_{2}-\varphi_{2}(\mathbf{x}+i\mathbf{y}))}\\
       +& &\frac{\varphi_{1}(x_{1}+iy,x_{2}+iy)-\varphi_{1}^{*}(x_{1},x_{2})}{(t_{1}-\varphi_{1}(\mathbf{x}+i\mathbf{y}))(t_{2}-\varphi_{2}(\mathbf{x}))(t_{2}-\varphi_{2}(\mathbf{x}+i\mathbf{y}))}
\end{eqnarray*}
Inserting this into equation (12) above we obtain
\begin{eqnarray*}
  & &\left\lvert C_{\varphi}(f)(\mathbf{x}+i\mathbf{y})-\frac{1}{(2\pi i)^{2}}\int_{-\infty}^{\infty}\int_{-\infty}^{\infty}\frac{f^{*}(t_{1},t_{2})dt_{1}dt_{2}}{(t_{1}-\varphi_{1}^{*}(\mathbf{x}))(t_{2}-\varphi_{2}^{*}(\mathbf{x}))}\right\rvert\\
  \leq& &\frac{\mid\varphi_{2}(\mathbf{x}+i\mathbf{y})-\varphi_{2}^{*}(\mathbf{x})\mid}{4\pi^{2}}\left\lvert\int_{\mathbb{R}^{2}}\frac{f^{*}(t_{1},t_{2})dt_{1}dt_{2}}{(t_{1}-\varphi_{1}(\mathbf{x}+i\mathbf{y}))(t_{2}-\varphi_{2}(\mathbf{x}))(t_{2}-\varphi_{2}(\mathbf{x}+i\mathbf{y}))}\right\rvert\\
  +& &\frac{\mid\varphi_{1}(\mathbf{x}+i\mathbf{y})-\varphi_{1}^{*}(\mathbf{x})\mid}{4\pi^{2}}\left\lvert\int_{\mathbb{R}^{2}}\frac{f^{*}(t_{1},t_{2})dt_{1}dt_{2}}{(t_{1}-\varphi_{1}(\mathbf{x}+i\mathbf{y}))(t_{2}-\varphi_{2}(\mathbf{x}))(t_{2}-\varphi_{2}(\mathbf{x}+i\mathbf{y}))}\right\rvert\\
  \leq& &\parallel f\parallel_{2}\frac{\mid\varphi_{2}(\mathbf{x}+i\mathbf{y})-\varphi_{2}^{*}(\mathbf{x})\mid}{4\pi^{2}}\bigg(\int_{\mathbb{R}^{2}}\frac{dt_{1}dt_{2}}{\mid(t_{2}-\varphi_{2}(\mathbf{x}))\prod_{j=1}^{2}(t_{j}-\varphi_{j}(\mathbf{x}+i\mathbf{y}))\mid^{2}}\bigg)^{\frac{1}{2}}\\
  +& &\parallel f\parallel_{2}\frac{\mid\varphi_{1}(\mathbf{x}+i\mathbf{y})-\varphi_{1}^{*}(\mathbf{x})\mid}{4\pi^{2}}\bigg(\int_{\mathbb{R}^{2}}\frac{dt_{1}dt_{2}}{\mid(t_{2}-\varphi_{2}(\mathbf{x}))\prod_{j=1}^{2}(t_{j}-\varphi_{j}(\mathbf{x}+i\mathbf{y}))\mid^{2}}\bigg)^{\frac{1}{2}}
\end{eqnarray*}
by Cauchy-Schwarz inequality. Now consider
\begin{eqnarray*}
 & &\int_{\mathbb{R}^{2}}\frac{dt_{1}dt_{2}}{\mid(t_{1}-\varphi_{1}(\mathbf{x}+i\mathbf{y}))(t_{2}-\varphi_{2}(\mathbf{x}))(t_{2}-\varphi_{2}(\mathbf{x}+i\mathbf{y}))\mid^{2}}\\
 =& &\bigg(\int_{-\infty}^{\infty}\frac{dt_{1}}{\mid t_{1}-\varphi_{1}(\mathbf{x}+i\mathbf{y})\mid^{2}}\bigg)\bigg(\int_{-\infty}^{\infty}\frac{dt_{1}}{\mid(t_{2}-\varphi_{2}(\mathbf{x}))(t_{2}-\varphi_{2}(\mathbf{x}+i\mathbf{y}))\mid^{2}}\bigg)
\end{eqnarray*}
We have $\lim_{y\rightarrow
0}\varphi_{j}(\mathbf{x}+i\mathbf{y})=\varphi_{j}^{*}(\mathbf{x})$,
$j=1,2$. Let
\begin{equation*}
\varepsilon_{0}=\frac{\inf\{\mid
t-\varphi_{1}^{*}(\mathbf{x}):t\in\mathbb{R}\}\cup\{\mid
t-\varphi_{2}^{*}(\mathbf{x}):t\in\mathbb{R}\}}{2}
\end{equation*}
Choose $\varepsilon_{0}>\varepsilon>0$ such that $\forall$
$0<y<\delta$ we have
\begin{equation*}
\mid\varphi_{j}(\mathbf{x}+i\mathbf{y})-\varphi_{j}(\mathbf{x})\mid<\frac{\varepsilon}{2}
\end{equation*}
Then one has by triangle inequality
\begin{equation*}
\mid t_{1}-\varphi_{1}(\mathbf{x}+i\mathbf{y})\mid\geq\mid
t_{1}-\varphi_{1}^{*}(\mathbf{x})\mid-\varepsilon_{0}>\varepsilon_{0}
\end{equation*}
and this implies that
\begin{equation*}
\frac{1}{\mid
t_{1}-\varphi_{1}(\mathbf{x}+i\mathbf{y})\mid}\leq\frac{1}{\mid
t_{1}-\varphi_{1}^{*}(\mathbf{x})\mid-\varepsilon_{0}}
\end{equation*}
hence we have
\begin{equation*}
\int_{-\infty}^{\infty}\frac{dt_{1}}{\mid
t_{1}-\varphi_{1}(\mathbf{x}+i\mathbf{y})\mid^{2}}\leq\int_{-\infty}^{\infty}\frac{dt_{1}}{(\mid
t_{1}-\varphi_{1}^{*}(\mathbf{x})\mid-\varepsilon_{0})^{2}}=M_{\varepsilon_{0},\mathbf{x}}
\end{equation*}
since the integral on the right hand side converges and its value
only depends on $\varepsilon_{0}$ and
$\mathbf{x}=(x_{1},x_{2})\in\mathbb{R}^{2}$. Similarly by the same
arguments as in [Gul] we have
\begin{eqnarray*}
   & &\int_{-\infty}^{\infty}\frac{dt_{2}}{\mid(t_{2}-\varphi_{2}^{*}(\mathbf{x}))(t_{2}-\varphi_{2}(\mathbf{x}+i\mathbf{y}))\mid^{2}}\\
   \leq& &\int_{-\infty}^{\infty}\frac{dt_{2}}{(\mid t_{2}-\varphi_{2}^{*}(\mathbf{x})\mid-\varepsilon_{0})^{2}\mid t_{2}-\varphi_{2}^{*}(\mathbf{x})\mid^{2}}=K_{\varepsilon_{0},\mathbf{x}}
\end{eqnarray*}
As a result we have
\begin{eqnarray*}
  & &\left\lvert C_{\varphi}(f)(\mathbf{x}+i\mathbf{y})-\frac{1}{(2\pi i)^{2}}\int_{-\infty}^{\infty}\int_{-\infty}^{\infty}\frac{f^{*}(t_{1},t_{2})dt_{1}dt_{2}}{(t_{1}-\varphi_{1}^{*}(\mathbf{x}))(t_{2}-\varphi_{2}^{*}(\mathbf{x}))}\right\rvert\\
  \leq& &\frac{(M_{\varepsilon_{0},\mathbf{x}}K_{\varepsilon_{0},\mathbf{x}})^{\frac{1}{2}}}{4\pi^{2}}(\mid\varphi_{1}(\mathbf{x}+i\mathbf{y})-\varphi_{1}^{*}(\mathbf{x})\mid+\mid\varphi_{2}(\mathbf{x}+i\mathbf{y})-\varphi_{2}^{*}(\mathbf{x})\mid)\\
  \leq& &\frac{(M_{\varepsilon_{0},\mathbf{x}}K_{\varepsilon_{0},\mathbf{x}})^{\frac{1}{2}}}{4\pi^{2}}\varepsilon
\end{eqnarray*}
Therefore
\begin{equation*}
\lim_{y\rightarrow
0}(C_{\varphi}f)(x_{1}+iy,x_{2}+iy)=\frac{1}{(2\pi
i)^{2}}\int_{-\infty}^{\infty}\int_{-\infty}^{\infty}\frac{f^{*}(t_{1},t_{2})dt_{1}dt_{2}}{(t_{1}-\varphi_{1}^{*}(x_{1},x_{2}))(t_{2}-\varphi_{2}^{*}(x_{1},x_{2}))}
\end{equation*}
\end{proof}
Throughout the rest of the paper we will identify a function $f$
in $H^{2}(\mathbb{H}^{2})$ or $H^{\infty}(\mathbb{H}^{2})$ with
its boundary function $f^{*}$. We formulate and prove our
approximation scheme as the following proposition.

\begin{proposition}
Let $\varphi:\mathbb{H}^{2}\rightarrow$ $\mathbb{H}^{2}$ be an
analytic self-map of $\mathbb{H}^{2}$ such that
$$\varphi(z_{1},z_{2})=(p_{1}z_{1}+\psi_{1}(z_{1},z_{2}),p_{2}z_{2}+\psi_{2}(z_{1},z_{2}))$$ $p_{1},p_{2} > 0$ and
    $\psi_{j}\in$ $H^{\infty}$ is such that $\Im(\psi_{j}(z))>\epsilon>0$, $j=1,2$ for
    all $(z_{1},z_{2})\in\mathbb{H}^{2}$. Then there is an $\alpha\in$ $\mathbb{R}^{+}$
    such that for $C_{\varphi}:H^{2}(\mathbb{H}^{2})\rightarrow$ $H^{2}(\mathbb{H}^{2})$ we have
\begin{equation*}
    C_{\varphi} = V_{p_{1},p_{2}}\sum_{n,m=0}^{\infty}T_{\tau_{1}^{n}}T_{\tau_{2}^{m}}D_{\vartheta_{1,n}}D_{\vartheta_{2,m}},
\end{equation*}
    where the convergence of the series is in operator norm,
    $T_{\tau_{1}^{n}}$ and $T_{\tau_{2}^{m}}$ are the Toeplitz operators with symbols $\tau_{1}^{n}$ and $\tau_{2}^{m}$ respectively,
\begin{equation*}
    \tau_{j}(x_{1},x_{2})=i\alpha-\tilde{\psi_{j}}(x_{1},x_{2}),\quad
    \tilde{\psi}(x_{1},x_{2})=\psi(\frac{x_{1}}{p_{1}},\frac{x_{2}}{p_{2}}),
\end{equation*}
     $V_{p_{1},p_{2}}$ is the dilation
    operator defined as
\begin{equation*}
(V_{p_{1},p_{2}}f)(z_{1},z_{2})=f(\frac{z_{1}}{p_{1}},\frac{z_{2}}{p_{2}})
\end{equation*}

     and $D_{\vartheta_{1,n}}$ and $D_{\vartheta_{2,m}}$ are the
    Fourier multipliers with $\vartheta_{1,n}(t_{1},t_{2})=$
    $\frac{(-it_{1})^{n}e^{-\alpha t_{1}}}{n!}$ and $\vartheta_{2,m}(t_{1},t_{2})=$
    $\frac{(-it_{2})^{m}e^{-\alpha t_{2}}}{m!}$ respectively. \label{prop3}
\end{proposition}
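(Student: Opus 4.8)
The plan is to mirror, line by line, the one-variable argument of [Gul]. First I would reduce to $p_{1}=p_{2}=1$: exactly as in the proof of the boundedness proposition above one has $C_{\varphi}=V_{p_{1},p_{2}}C_{\varphi_{0}}$, where $\varphi_{0}(z_{1},z_{2})=(z_{1}+\tilde{\psi}_{1}(z_{1},z_{2}),z_{2}+\tilde{\psi}_{2}(z_{1},z_{2}))$ and $V_{p_{1},p_{2}}$ is invertible, so it suffices to establish the series for $C_{\varphi_{0}}$; to lighten notation I would write $\psi_{j},\varphi$ for $\tilde{\psi}_{j},\varphi_{0}$. Since $\Im\psi_{j}\geq\epsilon$ on $\mathbb{H}^{2}$, the boundary function $\psi_{j}^{*}$ satisfies $\Im\psi_{j}^{*}\geq\epsilon$ a.e.\ on $\mathbb{R}^{2}$ (Fatou / Poisson representation), hence $\Im\varphi_{j}^{*}\geq\epsilon>0$ a.e.\ and the integral representation formula of the preceding proposition applies to $\varphi$.

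The engine of the expansion is the elementary identity $\frac{1}{t-w}=i\int_{0}^{\infty}e^{isw}e^{-ist}\,ds$, valid for $t\in\mathbb{R}$ and $\Im w>0$. Applying it to each factor $\frac{1}{t_{j}-\varphi_{j}^{*}(x)}$ in the integral formula, writing $e^{is_{j}\varphi_{j}^{*}(x)}=e^{is_{j}x_{j}}e^{is_{j}\psi_{j}^{*}(x)}$, and using $\psi_{j}^{*}=i\alpha-\tau_{j}$ to get $e^{is_{j}\psi_{j}^{*}(x)}=e^{-\alpha s_{j}}\sum_{n\geq0}\frac{(-is_{j}\tau_{j}(x))^{n}}{n!}$, I would formally interchange the $t$- and $s$-integrations and the summations; recognising the $t$-integral as a constant multiple of $\mathcal{F}f$ and the $s$-integral as an inverse Fourier transform produces $(C_{\varphi}f)(x)=\sum_{n,m\geq0}\tau_{1}(x)^{n}\tau_{2}(x)^{m}(D_{\vartheta_{1,n}}D_{\vartheta_{2,m}}f)(x)$. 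Since $\tau_{j}=i\alpha-\psi_{j}\in H^{\infty}(\mathbb{H}^{2})$, the Toeplitz operators $T_{\tau_{j}^{k}}$ act as multiplication by $\tau_{j}^{k}$ and $T_{\tau_{1}^{n}}T_{\tau_{2}^{m}}=T_{\tau_{1}^{n}\tau_{2}^{m}}$, so the right-hand side is precisely $\sum_{n,m}T_{\tau_{1}^{n}}T_{\tau_{2}^{m}}D_{\vartheta_{1,n}}D_{\vartheta_{2,m}}f$.

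Next comes the choice of $\alpha$ and the operator-norm convergence. Because $\psi_{j}$ is bounded with $\Im\psi_{j}\geq\epsilon$, its range — and a.e.\ its boundary values — lie in a compact set $K_{j}\subset\mathbb{H}$; applying the geometric Lemma \ref{lem1} to $K_{1}\cup K_{2}$ yields $\alpha\in\mathbb{R}^{+}$ and $\delta\in(0,1)$ with $|i\alpha-z|\leq\delta\alpha$ for all $z\in K_{1}\cup K_{2}$, whence $\|\tau_{j}\|_{\infty}\leq\delta\alpha$ and $\|T_{\tau_{j}^{n}}\|=\|\tau_{j}\|_{\infty}^{n}\leq(\delta\alpha)^{n}$. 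On the multiplier side $\|D_{\vartheta_{j,n}}\|=\|\vartheta_{j,n}\|_{\infty}=\sup_{t\geq0}\frac{t^{n}e^{-\alpha t}}{n!}=\frac{n^{n}}{\alpha^{n}e^{n}n!}$. Multiplying and using Stirling (or the ratio test, with $(1+1/n)^{n}\to e$), $\sum_{n}(\delta\alpha)^{n}\frac{n^{n}}{\alpha^{n}e^{n}n!}=\sum_{n}\delta^{n}\frac{n^{n}}{e^{n}n!}<\infty$, so the double series $\sum_{n,m}\|T_{\tau_{1}^{n}}T_{\tau_{2}^{m}}D_{\vartheta_{1,n}}D_{\vartheta_{2,m}}\|$, being dominated by the product of two such sums, converges in operator norm; denote its sum by $S$.

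It remains to verify $S=C_{\varphi}$ rigorously, bypassing the unjustified interchange above — the obstruction being that $f\in H^{2}\not\subseteq L^{1}$, so Fubini on the $ds\,dt$ integral is not directly available. I would do this on the dense set $\mathcal{D}=\{f:\mathcal{F}f\in C_{c}((\mathbb{R}^{+})^{2})\}$: for $f\in\mathcal{D}$ the partial sum $S^{(N)}f(x)$ equals $\frac{1}{2\pi}\int_{(\mathbb{R}^{+})^{2}}\big(\sum_{n\leq N}\tfrac{(-is_{1}\tau_{1}(x))^{n}}{n!}\big)\big(\sum_{m\leq N}\tfrac{(-is_{2}\tau_{2}(x))^{m}}{m!}\big)e^{-\alpha(s_{1}+s_{2})}(\mathcal{F}f)(s)e^{is\cdot x}\,ds$, and since $\mathcal{F}f$ has compact support and $\tau_{j}\in L^{\infty}$, dominated convergence in $s$ gives, for a.e.\ $x$, $S^{(N)}f(x)\to\frac{1}{2\pi}\int(\mathcal{F}f)(s)e^{is\cdot\varphi^{*}(x)}\,ds=f(\varphi^{*}(x))=(C_{\varphi}f)(x)$, the last step being the Bochner representation of $f$ evaluated at the point $\varphi^{*}(x)\in\mathbb{H}^{2}$. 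On the other hand $S^{(N)}\to S$ in operator norm, so $S^{(N)}f\to Sf$ in $H^{2}$ and hence a.e.\ along a subsequence; therefore $Sf=C_{\varphi}f$ for every $f\in\mathcal{D}$, and by density $S=C_{\varphi}$. Undoing the first reduction gives $C_{\varphi}=V_{p_{1},p_{2}}S=V_{p_{1},p_{2}}\sum_{n,m}T_{\tau_{1}^{n}}T_{\tau_{2}^{m}}D_{\vartheta_{1,n}}D_{\vartheta_{2,m}}$. The main obstacle is precisely this last identification — upgrading the formal series to a genuine operator identity — together with the bookkeeping needed to keep all imaginary parts positive so that both the integral formula and the Bochner representation remain applicable throughout.
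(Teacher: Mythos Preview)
Your proof is correct, but your route to operator--norm convergence and to the identity $S=C_{\varphi}$ is genuinely different from the paper's. The paper works entirely on the Cauchy--kernel side: it writes
\[
\frac{1}{w_{j}-x_{j}-\psi_{j}(x)}=\frac{1}{(x_{j}-w_{j}+i\alpha)\bigl(1-\frac{i\alpha-\psi_{j}(x)}{x_{j}-w_{j}+i\alpha}\bigr)},
\]
expands this as a geometric series (the same $\alpha$ from Lemma~\ref{lem1} makes the ratio $<\delta<1$ uniformly in $x,w$), and then --- rather than estimating each term --- bounds the \emph{remainder} after truncation at $(N_{1},N_{2})$ by recognising it as $\delta^{N_{j}}$ times an integral that is itself a bounded composition operator (namely $C_{\varphi}$, or $C_{\varphi_{j}}$ with $\varphi_{1}(z)=(z_{1}+\psi_{1},z_{2}+i\alpha)$, etc., bounded by the preceding proposition). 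The identification $K_{n,m}=D_{\vartheta_{1,n}}D_{\vartheta_{2,m}}$ then comes out at the end.

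You instead pass to the Fourier side from the start, expand $e^{-is_{j}\tau_{j}}$ as a Taylor series, and prove absolute convergence term by term via $\|T_{\tau_{j}^{n}}\|\le(\delta\alpha)^{n}$, $\|D_{\vartheta_{j,n}}\|=n^{n}/(\alpha^{n}e^{n}n!)$ and Stirling. What the paper's route buys is that the convergence argument is soft (geometric decay of a remainder, no Stirling) and the identity $S=C_{\varphi}$ is automatic, since one is literally truncating the kernel of $C_{\varphi}$; what your route buys is explicit term--by--term norm control and a clean conceptual picture via Bochner, at the price of a separate density argument to upgrade pointwise convergence on $\mathcal{D}$ to the operator identity. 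Both arguments ultimately lean on the boundedness proposition: theirs to control the remainder, yours to pass from $\mathcal{D}$ to all of $H^{2}$.
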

\begin{proof}
    Since for $\varphi(z_{1},z_{2})=$ $(p_{1}z_{1}+\psi_{1}(z_{1},z_{2}),p_{2}z_{2}+\psi_{2}(z_{1},z_{2}))$ where $\psi_{j}\in$
    $H^{\infty}$ with $\Im(\psi_{j}(z_{1},z_{2})) > \epsilon > 0$ for all $z\in$
    $\mathbb{H}$ and $p_{1},p_{2} > 0$, we have
\begin{equation*}
    \Im(\varphi_{j}^{*}(x_{1},x_{2}))\geq\epsilon>0\quad\textrm{for almost every}\quad (x_{1},x_{2})\in\mathbb{R}^{2}.
\end{equation*}
    We can use Proposition 7
    for $C_{\varphi}:H^{2}\rightarrow$ $H^{2}$ to have
    \begin{eqnarray*}
    & &(C_{\varphi}f)(x_{1},x_{2}) = \frac{1}{(2\pi i)^{2}}\int_{\mathbb{R}^{2}}\frac{f(w_{1},w_{2})dw_{1}dw_{2}}{(w_{1}-\varphi_{1}(\mathbf{x}))(w_{2}-\varphi_{2}(\mathbf{x}))}\\
    =& &\frac{1}{(2\pi i)^{2}}\int_{\mathbb{R}^{2}}\frac{f(w_{1},w_{2})dw_{1}dw_{2}}{(w_{1}-px_{1}-\psi_{1}(\mathbf{x}))(w_{2}-px_{2}-\psi_{2}(\mathbf{x}))}.
    \end{eqnarray*}
    where $\mathbf{x}=(x_{1},x_{2})\in\mathbb{R}^{2}$. Without loss of generality, we take $p_{1}=p_{2}=1$, since if $p_{1}\neq 1$ or $p_{2}\neq 1$ then we have
    \begin{equation}
        (V_{\frac{1}{p_{1}},\frac{1}{p_{2}}}C_{\varphi})(f)(x_{1},x_{2}) = \frac{1}{(2\pi i)^{2}}\int_{\mathbb{R}^{2}}\frac{f(w_{1},w_{2})dw_{1}dw_{2}}{\prod_{j=1}^{2}(w_{j}-x_{j}-\tilde{\psi_{j}}(x_{1},x_{2}))},
    \end{equation}
     where $\tilde{\psi_{j}}(x_{1},x_{2})=$ $\psi_{j}(\frac{x_{1}}{p_{1}},\frac{x_{2}}{p_{2}})$ and
    $V_{\beta_{1},\beta_{2}}f(z_{1},z_{2})=$ $f(\beta_{1}z_{1},\beta_{2}z_{2})$ ($\beta_{1},\beta_{2} > 0$) is the dilation
    operator. We observe that
    \begin{eqnarray}
        & &\frac{1}{(w_{1}-x_{1}-\psi_{1}(\mathbf{x}))(w_{2}-x_{2}-\psi_{2}(\mathbf{x}))} =\\
        &\nonumber &\frac{1}{(x_{1}-w_{1}+i\alpha-(i\alpha-\psi_{1}(\mathbf{x})))(x_{2}-w_{2}+i\alpha-(i\alpha-\psi_{2}(\mathbf{x})))}= \\
        &\nonumber &\frac{1}{(x_{1}-w_{1}+i\alpha)(x_{2}-w_{2}+i\alpha)\left(1-\left(\dfrac{i\alpha-\psi_{1}(\mathbf{x})}{x_{1}-w_{1}+i\alpha}\right)\right)\left(1-\left(\dfrac{i\alpha-\psi_{2}(\mathbf{x})}{x_{2}-w_{2}+i\alpha}\right)\right)} .
    \end{eqnarray}
    Since $\Im(\psi_{j}(z_{1},z_{2}))>\epsilon>0$ for all $(z_{1},z_{2})\in\mathbb{H}^{2}$ and
    $\psi_{j}\in H^{\infty}$, we have $\overline{\psi_{j}(\mathbb{H}^{2})}$ is
    compact in $\mathbb{H}$, and then by Lemma 4 there is an $\alpha > 0$ such that
\begin{equation*}
    \left\lvert\frac{i\alpha-\psi_{j}(\mathbf{x})}{x_{j}-w_{j}+i\alpha}\right\rvert<\delta<1
\end{equation*}
    for all $(x_{1},x_{2}),(w_{1},w_{2})\in\mathbb{R}^{2}$. So we have
    \begin{equation*}
        \frac{1}{1-\left(\dfrac{i\alpha-\psi_{j}(\mathbf{x})}{x_{j}-w_{j}+i\alpha}\right)} =
        \sum_{n=0}^{\infty}\left(\frac{i\alpha-\psi_{j}(\mathbf{x})}{x_{j}-w_{j}+i\alpha}\right)^{n}.
    \end{equation*}
    Inserting this into equation (14) and then into equation (13), we
    have
    \begin{equation*}
        (C_{\varphi}f)(x_{1},x_{2}) = \sum_{n,m=0}^{(N_{1}-1),(N_{2}-1)}T_{\tau_{1}^{n}}T_{\tau_{2}^{m}}K_{n,m}f(\mathbf{x}) +
        R_{1,N_{1}}f(\mathbf{x})+ R_{2,N_{2}}f(\mathbf{x})+R_{N_{1},N_{2}}f(\mathbf{x}),
    \end{equation*}
    where $T_{\tau_{j}^{n}}f(x_{1},x_{2})=$ $\tau_{j}^{n}(x_{1},x_{2})f(x_{1},x_{2})$, $\tau_{j}(x_{1},x_{2})=$ $i\alpha
    -\psi_{j}(x_{1},x_{2})$, $K_{n,m}$ is defined as
\begin{equation*}
K_{n,m}f(x_{1},x_{2})=\frac{1}{(2\pi
i)^{2}}\int_{\mathbb{R}^{2}}\frac{f(w_{1},w_{2})dw_{1}dw_{2}}{(x_{1}-w_{1}+i\alpha)^{n+1}(x_{2}-w_{2}+i\alpha)^{m+1}}
\end{equation*}
and
\begin{equation*}
R_{1,N_{1}}f(x_{1},x_{2})=\sum_{m=0}^{N_{2}}T_{\tau_{1}^{N_{1}+1}}T_{\tau_{2}^{m}}K_{N_{1}+1,m}f(\mathbf{x}),
\end{equation*}
\begin{equation*}
R_{2,N_{2}}f(x_{1},x_{2})=\sum_{n=0}^{N_{1}}T_{\tau_{1}^{n}}T_{\tau_{2}^{N_{2}+1}}K_{n,N_{2}+1}f(\mathbf{x}),
\end{equation*}
\begin{eqnarray*}
    & &R_{N_{1},N_{2}}f(x)=\\
    & &\frac{1}{(2\pi i)^{2}}T_{\tau_{1}^{N_{1}+1}}T_{\tau_{2}^{N_{2}+1}}\int_{\mathbb{R}^{2}}\frac{f(w_{1},w_{2})dw_{1}dw_{2}}{\prod_{j=1}^{2}(x_{j}-w_{j}+i\alpha)^{N_{j}+1}(w_{j}-x_{j}-\psi_{j}(\mathbf{x}))}.
\end{eqnarray*}
Let
$\varphi_{1}(z_{1},z_{2})=(z_{1}+\psi_{1}(z_{1},z_{2}),z_{2}+i\alpha)$
and
$\varphi_{2}(z_{1},z_{2})=(z_{1}+i\alpha,z_{2}+\psi_{2}(z_{1},z_{2}))$
then we have the following estimates for $R_{1,N_{1}}$ and
$R_{2,N_{2}}$:
\begin{equation*}
\parallel R_{1,N_{1}}\parallel\leq\parallel C_{\varphi_{1}}\parallel\delta^{N_{1}+1}(1-\delta)^{-1},
\end{equation*}
\begin{equation*}
\parallel R_{2,N_{2}}\parallel\leq\parallel C_{\varphi_{2}}\parallel\delta^{N_{2}+1}(1-\delta)^{-1}.
\end{equation*}
By proposition 6, $C_{\varphi_{1}}$ and $C_{\varphi_{2}}$ are
bounded so we have $\parallel R_{1,N_{1}}\parallel\rightarrow 0$
and $\parallel R_{2,N_{2}}\parallel\rightarrow 0$ as
$N_{1},N_{2}\rightarrow\infty$. We have the following estimate for
$R_{N_{1},N_{2}}$:
\begin{equation*}
\parallel R_{N_{1},N_{2}}\parallel\leq\parallel
T_{\tau_{1}}\parallel\parallel T_{\tau_{2}}\parallel\parallel
C_{\varphi}\parallel\delta^{N_{1}+N_{2}}
\end{equation*} Hence $\parallel R_{N_{1},N_{2}}\parallel\rightarrow 0$ as
$N_{1},N_{2}\rightarrow\infty$. We observe that
\begin{equation*}
K_{n,m}=D_{\vartheta_{n,m}}
\end{equation*}
where $\vartheta_{n,m}(t_{1},t_{2})=\frac{(-it_{1})^{n}e^{-\alpha
t_{1}}}{n!}\frac{(-it_{2})^{m}e^{-\alpha t_{2}}}{m!}$. Hence we have
\begin{equation*}
    C_{\varphi} = \sum_{n,m=0}^{\infty}T_{\tau_{1}^{n}}T_{\tau_{2}^{m}}D_{\vartheta_{1,n}}D_{\vartheta_{2,m}},
\end{equation*}
where the convergence is in operator norm.
\end{proof}

\section{a $\Psi$-c*-algebra of operators on hardy spaces of $\mathbb{H}^{2}$}

In the preceding section we have shown that ``quasi-parabolic''
composition operators on the $\mathbb{H}^{2}$ lie in the
C*-algebra generated by certain Toeplitz operators and Fourier
multipliers. In this section we will identify the character space
of the C*-algebra generated by Toeplitz operators with a class of
symbols and Fourier multipliers. We identify this C*-algebra with
the tensor product of its one variable version which is treated in
[Gul] with itself. We will consider the C*-algebra of operators
acting on $H^{2}(\mathbb{H}^{2})$
\begin{equation*}
\Psi(QC(\mathbb{R}^{2}),C_{0}((\mathbb{R}^{+})^{2}))=C^{*}(\mathcal{T}(QC(\mathbb{R}^{2}))\cup
F_{C_{0}((\mathbb{R}^{+})^{2})})
\end{equation*}
where $$QC(\mathbb{R}^{2})=QC(\mathbb{R})\otimes
QC(\mathbb{R})=\overline{\{\sum_{j=1}^{n}f_{j}(x).g_{j}(y):f_{j},g_{j}\in
QC(\mathbb{R})\}}$$ and
$$\mathcal{T}(QC(\mathbb{R}^{2}))=C^{*}(\{T_{\phi}:\phi\in
QC(\mathbb{R}^{2})\}$$ is the Toeplitz C*-algebra with
$QC(\mathbb{R}^{2})$ symbols. Recall that $$QC(\mathbb{R})=\{f\in
L^{\infty}(\mathbb{R}):f\circ\mathfrak{C}^{-1}\in QC(\mathbb{T})\}$$
where $$\mathfrak{C}(z)=\frac{z-i}{z+i}$$ is the Cayley transform
and
$$QC(\mathbb{T})=(H^{\infty}(\mathbb{D})+C(\mathbb{T}))\cap\overline{H^{\infty}(\mathbb{D})+C(\mathbb{T})}$$
is the class of quasi-continuous functions.

In ([Gul]) we showed that the following sequence
\begin{equation}
0\xrightarrow{} K(H^{2}(\mathbb{H}))\xrightarrow{j}
\Psi(QC(\mathbb{R}),C_{0}(\mathbb{R}^{+}))\xrightarrow{\pi}
C(\mathbb{M})\xrightarrow{} 0
\end{equation}
is short exact where
$$\Psi(QC(\mathbb{R}),C_{0}(\mathbb{R}^{+}))=C^{*}(\mathcal{T}(QC(\mathbb{R}))\cup
F_{C_{0}(\mathbb{R}^{+})})$$ is the C*-algebra generated by
Toeplitz operators with QC symbols and continuous Fourier
multipliers and
\begin{equation}
\mathbb{M}\cong (M_{\infty}(QC(\mathbb{R}))\times
[0,\infty])\cup(M(QC(\mathbb{R}))\times\{\infty\})
\end{equation}
is the maximal ideal space of
$\Psi(QC(\mathbb{R}),C_{0}(\mathbb{R}^{+}))/K(H^{2}(\mathbb{H}))$.
Here $M(QC(\mathbb{R}))$ is the maximal ideal space of
$QC(\mathbb{R})$ and $$M_{\infty}(QC(\mathbb{R}))=\{x\in
M(QC(\mathbb{R})):x|_{C(\dot{\mathbb{R}})}=\delta_{\infty},\quad
\delta_{\infty}(f)=\lim_{t\rightarrow\infty}f(t)\}$$ is the fiber
of $M(QC(\mathbb{R})$ at $\infty$. Throughout the C*-algebra
$\Psi(QC(\mathbb{R}),C_{0}(\mathbb{R}^{+}))$ will be denoted by
$\Psi$ and $\Psi(QC(\mathbb{R}^{2}),C_{0}((\mathbb{R}^{+})^{2}))$
will be denoted by $\Psi_{2}$. Since $K(H^{2})$ is nuclear and
$C(\mathbb{M})$ is commutative and hence nuclear, by eqn. (15)
$\Psi$ is nuclear. Therefore all the C*-algebras that we will deal
with in this paper will be nuclear and $A\otimes B$ will denote
the closure of the algebraic tensor product of $A$ and $B$ with
respect to this unique C* norm. Following the approach in [DoH] we
identify $\Psi_{2}$ with $\Psi\otimes\Psi$ corresponding to the
identification of $H^{2}(\mathbb{H}^{2})$ with
$H^{2}(\mathbb{H})\otimes H^{2}(\mathbb{H})$. Define the operators
$W_{f}=T_{f}\hat{\otimes}I$ for any $f\in QC(\mathbb{R})$ as
\begin{equation*}
(W_{f}a)(z_{1},z_{2})=P(f(z_{1})a(z_{1},z_{2}))
\end{equation*}
 for $a\in H^{2}(\mathbb{H}^{2})$ and $W_{g}=I\hat{\otimes} T_{g}$ as
\begin{equation*}
(W_{g}a)(z_{1},z_{2})=P(g(z_{2})a(z_{1},z_{2}))
\end{equation*}
where $P$ is the orthogonal projection of $L^{2}(\mathbb{R}^{2})$
onto $H^{2}(\mathbb{H}^{2})$. In the same way for the Fourier
multipliers define $E_{\vartheta}=D_{\vartheta}\hat{\otimes} I$ as
\begin{equation*}
(E_{\vartheta}a)(z_{1},z_{2})=(\mathcal{F}^{-1}M_{\vartheta}\mathcal{F})(a)(z_{1},z_{2})
\end{equation*}
where $\vartheta\in$ $C_{0}(\mathbb{R}^{+})$ and
$E_{\tau}=I\hat{\otimes} D_{\tau}$ for $\tau\in$
$C_{0}(\mathbb{R}^{+})$ as
\begin{equation*}
(E_{\tau}a)(z_{1},z_{2})=(\mathcal{F}^{-1}M_{\tau}\mathcal{F})(a)(z_{1},z_{2})
\end{equation*}
where $M_{\vartheta}$ is defined as
\begin{equation*}
(M_{\vartheta}a)(t_{1},t_{2})=\vartheta(t_{1})a(t_{1},t_{2})
\end{equation*}
$M_{\tau}$ is defined as
\begin{equation*}
(M_{\tau}a)(t_{1},t_{2})=\tau(t_{2})a(t_{1},t_{2})
\end{equation*}
and $\mathcal{F}$ is the Fourier transform defined as in equation
(2). Since $\Psi_{2}$ is generated by
$\{W_{f},W_{g},E_{\vartheta},E_{\tau}:f,g\in
QC(\mathbb{R})\quad\vartheta , \tau\in C_{0}(\mathbb{R}^{+})\}$
and $\Psi$ is nuclear, $\Psi_{2}=\Psi\otimes\Psi$. By equation (6)
we have $K(H^{2}(\mathbb{H}^{2}))=K(H^{2}(\mathbb{H}))\otimes
K(H^{2}(\mathbb{H}))$. Since $\Psi_{2}=\Psi\otimes\Psi$ we have
\begin{equation*}
com(\Psi_{2})=com(\Psi\otimes\Psi)=I^{*}(com(\Psi)\otimes\Psi\cup\Psi\otimes
com(\Psi))
\end{equation*}
By equation (15) we have $com(\Psi)=K(H^{2}(\mathbb{H}))$ hence we
have
\begin{equation*}
K(H^{2}(\mathbb{H}^{2}))=K(H^{2}(\mathbb{H}))\otimes
K(H^{2}(\mathbb{H}))\subset com(\Psi\otimes\Psi)=com(\Psi_{2})
\end{equation*}
Hence by equations (3) and (7) we have
\begin{equation*}
M(\Psi_{2}/K(H^{2}(\mathbb{H}^{2})))=M(\Psi_{2})=M(\Psi\otimes\Psi)\cong\mathbb{M}\times\mathbb{M}
\end{equation*}
where $\mathbb{M}$ is as in equation (16). We summarize the result
of this section as the following proposition:
\begin{proposition}
Let $\Psi_{2}=C^{*}(\mathcal{T}(QC(\mathbb{R}^{2}))\cup
F_{C_{0}((\mathbb{R}^{+})^{2})})$ be the C*-algebra generated by
Toeplitz operators with $QC(\mathbb{R})\otimes QC(\mathbb{R})$
symbols and continuous Fourier multipliers acting on
$H^{2}(\mathbb{H}^{2})$. Then for the character space
$M(\Psi_{2}/K(H^{2}(\mathbb{H}^{2})))$ of
$\Psi_{2}/K(H^{2}(\mathbb{H}^{2}))$ we have
\begin{equation*}
M(\Psi_{2}/K(H^{2}(\mathbb{H}^{2})))\cong\mathbb{M}\times\mathbb{M}
\end{equation*}
where $\mathbb{M}$ is the maximal ideal space of the C*-algebra
$\Psi(QC(\mathbb{R}),C_{0}(\mathbb{R}^{+}))/K(H^{2}(\mathbb{H}))$
generated by Toeplitz operators with $QC(\mathbb{R})$ symbols and
continuous Fourier multipliers modulo compact operators acting on
$H^{2}(\mathbb{H})$.
\end{proposition}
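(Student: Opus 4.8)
The plan is to reduce the statement to the identification $\Psi_{2} \cong \Psi \otimes \Psi$ and then apply the two general facts recorded in Section 1: that $M(A/I) = M(A)$ whenever $I \subseteq com(A)$ (equation (3)), and that $M(A \otimes B) \cong M(A) \times M(B)$ for C*-algebras (equation (7)). A preliminary point is that $\Psi$ is nuclear: the short exact sequence (15) exhibits $\Psi$ as an extension of the commutative (hence nuclear, by Takesaki's theorem) C*-algebra $C(\mathbb{M})$ by the nuclear C*-algebra $K(H^{2}(\mathbb{H}))$, and extensions of nuclear C*-algebras by nuclear C*-algebras are nuclear; consequently the algebraic tensor product $\Psi \odot \Psi$ carries a unique C*-norm and $\Psi \otimes \Psi$ is unambiguous.

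The core step is to prove $\Psi_{2} = \Psi \otimes \Psi$ under the identification $H^{2}(\mathbb{H}^{2}) \cong H^{2}(\mathbb{H}) \otimes H^{2}(\mathbb{H})$. For the inclusion $\Psi_{2} \subseteq \Psi \otimes \Psi$ I would check each generator of $\Psi_{2}$: a Toeplitz operator $T_{\phi}$ with $\phi \in QC(\mathbb{R}) \otimes QC(\mathbb{R})$ is a norm-limit of finite sums of elementary tensors $T_{f} \hat{\otimes} T_{g} = W_{f}W_{g}$, since $QC(\mathbb{R}) \odot QC(\mathbb{R})$ is dense in $QC(\mathbb{R}^{2})$ by definition and $\phi \mapsto T_{\phi}$ is contractive; likewise a Fourier multiplier $D_{\vartheta}$ with $\vartheta \in C_{0}((\mathbb{R}^{+})^{2})$ is a norm-limit of sums $D_{\vartheta_{1}} \hat{\otimes} D_{\vartheta_{2}} = E_{\vartheta_{1}}E_{\vartheta_{2}}$ using density of $C_{0}(\mathbb{R}^{+}) \odot C_{0}(\mathbb{R}^{+})$ in $C_{0}((\mathbb{R}^{+})^{2})$. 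For the reverse inclusion one observes that all of $W_{f}, W_{g}, E_{\vartheta}, E_{\tau}$ lie in $\Psi_{2}$ and that together they generate $\Psi \otimes \Psi$; this is where I would follow the computation of [DoH] for the bi-disc Toeplitz C*-algebra, checking that the $*$-algebra generated by these operators is dense in $\Psi \odot \Psi$ and then invoking uniqueness of the C*-norm on $\Psi \odot \Psi$.

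With $\Psi_{2} = \Psi \otimes \Psi$ in hand, I would show $K(H^{2}(\mathbb{H}^{2})) \subseteq com(\Psi_{2})$. Equation (6) gives $K(H^{2}(\mathbb{H}^{2})) = K(H^{2}(\mathbb{H})) \otimes K(H^{2}(\mathbb{H}))$, and from (15) we read off $com(\Psi) = K(H^{2}(\mathbb{H}))$. Since $com(\Psi \otimes \Psi) = I^{*}(com(\Psi) \otimes \Psi \, \cup \, \Psi \otimes com(\Psi))$ contains $com(\Psi) \otimes com(\Psi)$, we obtain $K(H^{2}(\mathbb{H}^{2})) \subseteq com(\Psi_{2})$. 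Now assemble: by (3), $M(\Psi_{2}/K(H^{2}(\mathbb{H}^{2}))) = M(\Psi_{2})$; since $\Psi_{2} = \Psi \otimes \Psi$, equation (7) gives $M(\Psi_{2}) \cong M(\Psi) \times M(\Psi)$; and applying (3) once more with $com(\Psi) = K(H^{2}(\mathbb{H}))$ yields $M(\Psi) = M(\Psi/K(H^{2}(\mathbb{H}))) = \mathbb{M}$, where $\mathbb{M}$ is as in (16). Hence $M(\Psi_{2}/K(H^{2}(\mathbb{H}^{2}))) \cong \mathbb{M} \times \mathbb{M}$.

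I expect the main obstacle to be the identification $\Psi_{2} = \Psi \otimes \Psi$, and in particular the reverse inclusion: showing that the four families $\{W_{f}\}, \{W_{g}\}, \{E_{\vartheta}\}, \{E_{\tau}\}$ generate exactly the spatial tensor product and nothing larger. The density statements for the symbol classes and the appeal to nuclearity are routine, but matching the generators on the nose — so that no extra operators appear — requires the careful bookkeeping carried out in [DoH]; once that is secured, everything else is a formal consequence of the tensor-product identities (3), (6) and (7).
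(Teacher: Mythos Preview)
Your proposal is correct and follows essentially the same route as the paper: establish nuclearity of $\Psi$ from the extension (15), identify $\Psi_{2}$ with $\Psi\otimes\Psi$ via the generators $W_{f},W_{g},E_{\vartheta},E_{\tau}$ (following [DoH]), show $K(H^{2}(\mathbb{H}^{2}))\subseteq com(\Psi_{2})$ using (6) and $com(\Psi)=K(H^{2}(\mathbb{H}))$, and then conclude via (3) and (7). The paper is terser on the identification $\Psi_{2}=\Psi\otimes\Psi$ and on why $M(\Psi)=\mathbb{M}$, but your explicit treatment of both inclusions and the extra application of (3) are precisely what fills those gaps.
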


\section{main results}

In this section we prove the main results of this paper which
asserts that the essential spectra of quasi-parabolic composition
operators on the Hardy space of the poly-disc contain a
non-trivial set which consists of spiral curves as in one variable
case. In doing this we use multi-dimensional generalizations of
the methods employed in [Gul]. We prove the following proposition
which might be regarded as a weaker version of a multi-dimensional
generalization of lemma 3:
\begin{proposition}
Let $\psi\in QC(\mathbb{R})\otimes QC(\mathbb{R})\cap
H^{\infty}(\mathbb{H}^{2})$ then we have
\begin{equation*}
\{(\phi_{1}\hat{\otimes}\phi_{2})(\psi):\phi_{1},\phi_{2}\in
M_{\infty}(QC(\mathbb{R}))\}\supseteq\mathcal{C}_{(\infty,\infty)}(\psi)
\end{equation*}
where $\phi_{1}\hat{\otimes}\phi_{2}$
  is as defined by equation (8) and $\mathcal{C}_{(\infty,\infty)}(\psi)$ is defined to be the
set of points
  $w\in$ $\mathbb{C}$ for which there is a sequence $\{z_{n}\}\subset$
  $\mathbb{H}^{2}$ so that $z_{n}\rightarrow$ $(\infty,\infty)$ and
  $\psi(z_{n})\rightarrow$ $w$.
\end{proposition}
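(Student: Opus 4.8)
The plan is to reduce the two-variable statement to the one-variable Lemma~3 (which says $\mathcal{R}_{\infty}(\psi)=\mathcal{C}_{\infty}(\psi)$ for $\psi\in QC(\mathbb{R})\cap H^{\infty}(\mathbb{H})$) by a slicing/iteration argument, together with Hoffman's Proposition~2, which identifies the range of a $QC$ function on a fiber of the maximal ideal space with its local essential range. First I would fix $w\in\mathcal{C}_{(\infty,\infty)}(\psi)$ and a sequence $z_{n}=(z_{n}^{(1)},z_{n}^{(2)})\in\mathbb{H}^{2}$ with $z_{n}\to(\infty,\infty)$ and $\psi(z_{n})\to w$. The goal is to produce $\phi_{1},\phi_{2}\in M_{\infty}(QC(\mathbb{R}))$ with $(\phi_{1}\hat{\otimes}\phi_{2})(\psi)=w$.

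The key step is a measure-theoretic reformulation. Since $QC(\mathbb{R}^{2})\subseteq L^{\infty}(\mathbb{R}^{2})$ contains $C(\dot{\mathbb{R}}^{2})$ and is a C*-subalgebra, the analogue of Hoffman's Proposition~2 for the fiber over $(\infty,\infty)$ tells us that $w$ lies in the range of $\hat{\psi}$ on that fiber precisely when, for every $\varepsilon>0$ and every $n>0$, the set $\{|\psi-w|<\varepsilon\}\cap\big((\mathbb{R}\setminus[-n,n])\times(\mathbb{R}\setminus[-n,n])\big)$ has positive planar Lebesgue measure. So the first sub-step is: show $w\in\mathcal{C}_{(\infty,\infty)}(\psi)$ forces this positive-measure condition. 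This uses that $\psi$ extends continuously from $\mathbb{H}^{2}$ only in the sense of cluster values, so I would instead argue as in [Gul]: pick a point $(x_{n}^{1},x_{n}^{2})$ on the distinguished boundary near the boundary value of $z_{n}$ where $\psi^{*}$ is close to $w$ (using that the nontangential boundary function of a bounded holomorphic function on $\mathbb{H}^{2}$ realizes the cluster values on sets of positive measure), thereby filling out a positive-measure subset of the corner region. Then the second sub-step factors the fiber: because $QC(\mathbb{R}^{2})=QC(\mathbb{R})\otimes QC(\mathbb{R})$, a character of the form $\phi_{1}\hat{\otimes}\phi_{2}$ with both $\phi_{j}\in M_{\infty}(QC(\mathbb{R}))$ is exactly a point of the fiber $M_{(\infty,\infty)}$ of $M(QC(\mathbb{R}^{2}))=M(QC(\mathbb{R}))\times M(QC(\mathbb{R}))$, by equation~(7). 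Combining, $w\in\mathcal{C}_{(\infty,\infty)}(\psi)$ gives a character in that fiber taking the value $w$ on $\psi$, which is exactly the claimed inclusion.

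Concretely I would carry out the steps in this order: (i) record that $M(QC(\mathbb{R}^{2}))\cong M(QC(\mathbb{R}))\times M(QC(\mathbb{R}))$ and that the sub-collection $\{\phi_{1}\hat{\otimes}\phi_{2}:\phi_{j}\in M_{\infty}(QC(\mathbb{R}))\}$ is precisely the fiber over $(\infty,\infty)$ of $C(\dot{\mathbb{R}}^{2})\subseteq QC(\mathbb{R}^{2})$; (ii) state the two-variable version of Hoffman's Proposition~2, noting as in the text that Hoffman's proof is insensitive to which C*-subalgebra containing $C$ one uses and carries over to $\mathbb{R}^{2}$ via the Cayley transform; (iii) given $w\in\mathcal{C}_{(\infty,\infty)}(\psi)$ and $\varepsilon,n>0$, produce a positive-measure subset of $\{|\psi^{*}-w|<\varepsilon\}$ inside the corner $(\mathbb{R}\setminus[-n,n])^{2}$ by pushing the interior cluster-point sequence $z_{n}\to(\infty,\infty)$ to the distinguished boundary; (iv) conclude by Hoffman that $w$ is in the range of $\hat{\psi}$ on the fiber, i.e.\ $w=(\phi_{1}\hat{\otimes}\phi_{2})(\psi)$ for some $\phi_{j}\in M_{\infty}(QC(\mathbb{R}))$.

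The main obstacle is step~(iii): passing from the interior cluster condition (a single sequence $z_n\to(\infty,\infty)$ with $\psi(z_n)\to w$) to a \emph{positive-measure} boundary condition on the corner. In one variable this is exactly what Lemma~3 packages; here one must be careful that approaching $(\infty,\infty)$ in $\mathbb{H}^2$ along an arbitrary sequence can be ``unbalanced'' (one coordinate much larger than the other), and one needs the boundary function $\psi^{*}$ — which exists and is in $QC(\mathbb{R}^2)$ by the hypothesis — to inherit the value $w$ on a set of positive planar measure near the corner. I expect this to follow by combining the one-variable result of Lemma~3 applied to suitable slices $x_2\mapsto\psi^{*}(x_1,x_2)$ for $x_1$ in a positive-measure set, with Fubini; this is the only place where genuine work beyond bookkeeping is required, and it is the analogue of the corresponding argument in sec.~4 of [Gul].
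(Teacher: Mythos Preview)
Your approach is genuinely different from the paper's, and the difference matters.

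The paper never attempts a two-variable analogue of Lemma~3 or of Hoffman's proposition. Instead it exploits the tensor structure of $\psi$ directly: it first treats finite sums $\psi(z_1,z_2) = \sum_{j=1}^m \varphi_j(z_1)\eta_j(z_2)$ with $\varphi_j, \eta_j \in QC(\mathbb{R})\cap H^\infty(\mathbb{H})$. Given $(z_{1,n}, z_{2,n}) \to (\infty,\infty)$ with $\psi(z_{1,n}, z_{2,n}) \to w$, it passes to a subsequence along which each one-variable sequence $\varphi_j(z_{1,n}) \to w_{1,j}$ and $\eta_j(z_{2,n}) \to w_{2,j}$ converges (finitely many bounded sequences), and then invokes the one-variable Proposition~2 and Lemma~3 to produce $\phi_1, \phi_2 \in M_\infty(QC(\mathbb{R}))$ with $\phi_1(\varphi_j) = w_{1,j}$ and $\phi_2(\eta_j) = w_{2,j}$ for all $j$; then $(\phi_1\hat{\otimes}\phi_2)(\psi) = \sum_j w_{1,j}w_{2,j} = w$. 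The general case is handled by a Cantor diagonalization on infinite sums.

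Your route --- identify the fiber over $(\infty,\infty)$ as $M_\infty(QC)\times M_\infty(QC)$, set up a two-variable Hoffman proposition, and reduce everything to a two-variable local-essential-range inclusion --- is conceptually clean, but it leaves step~(iii) essentially open, and your proposed mechanism for it does not work as stated. The slicing plan asks to apply the one-variable Lemma~3 to $x_2 \mapsto \psi^*(x_1, x_2)$ for a positive-measure set of $x_1$'s, but the interior sequence $(z_n^{(1)}, z_n^{(2)})$ has \emph{both} coordinates running to $\infty$ simultaneously: there is no fixed $x_1$, nor any fixed first-variable slice in $\mathbb{H}$, along which a one-variable cluster hypothesis is available for the second variable. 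To run a Fubini argument you would first have to push the interior cluster information in the \emph{first} variable down to a positive-measure boundary set --- but that is itself a Lemma-3-type statement, now with the second variable still moving, and it cannot be extracted from slicing plus the one-variable lemma alone. In effect your step~(iii) \emph{is} a genuine two-variable analogue of Lemma~3, and you have not supplied a proof of it.

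The paper sidesteps this entirely: by writing $\psi$ as a sum of elementary tensors it decouples the two coordinate sequences $\{z_{1,n}\}$ and $\{z_{2,n}\}$ at the level of the function, so that the one-variable Lemma~3 can be applied to each factor $\varphi_j$, $\eta_j$ independently. That decoupling is exactly what the hypothesis $\psi\in QC(\mathbb{R})\otimes QC(\mathbb{R})$ buys, and it is the idea your proposal is missing.
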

\begin{proof}
Let us first show the above inclusion for functions of the form
\begin{equation*}
\psi(z_{1},z_{2})=\sum_{j=1}^{m}\varphi_{j}(z_{1})\eta_{j}(z_{2})
\end{equation*}
where $\varphi_{j},\eta_{j}\in QC(\mathbb{R})\cap
H^{\infty}(\mathbb{H})$. Let
$w\in\mathcal{C}_{(\infty,\infty)}(\psi)$ then there exists a
sequence $\{z_{n}\}\in\mathbb{H}^{2}$, $z_{n}=(z_{1,n},z_{2,n})$
such that $z_{n}=(z_{1,n},z_{2,n})\rightarrow (\infty,\infty)$ and
$\psi(z_{1,n},z_{2,n})\rightarrow w$. Since
$\varphi_{j},\eta_{j}\in QC(\mathbb{R})\cap
H^{\infty}(\mathbb{H})\subset H^{\infty}(\mathbb{H})$, the
sequences $\{\varphi_{j}(z_{1,n})\}$ and $\{\eta_{j}(z_{2,n})\}$
have convergent subsequences, hence without loss of generality (by
passing to a subsequence if needed) there are $w_{1,j},
w_{2,j}\in\mathbb{H}$ such that
\begin{equation}
\varphi_{j}(z_{1,n})\rightarrow
w_{1,j}\qquad\qquad\textrm{and}\qquad\qquad
\eta_{j}(z_{2,n})\rightarrow w_{2,j}
\end{equation}
as $n\rightarrow\infty$, where $j\in\{1,2,...,m\}$. Since the
index $j$ takes finite number of values we observe that one may
find a single sequence $z_{n}=(z_{1,n},z_{2,n})\in\mathbb{H}^{2}$
such that equation (17) holds for all $j\in\{1,2,...,m\}$. By
proposition 2 and lemma 3 there are $\phi_{1},\phi_{2}\in
M_{\infty}(QC(\mathbb{R}))$ such that
$\phi_{1}(\varphi_{j})=w_{1,j}$ and $\phi_{2}(\eta_{j})=w_{2,j}$
for all $j\in\{1,2,...,m\}$. Since
$\sum_{j=1}^{m}w_{1,j}w_{2,j}=w$ we have
\begin{equation*}
(\phi_{1}\hat{\otimes}\phi_{2})(\psi)=w
\end{equation*}
Therefore we have
\begin{equation*}
\mathcal{C}_{(\infty,\infty)}(\psi)\subseteq\{(\phi_{1}\hat{\otimes}\phi_{2})(\psi):\phi_{1},\phi_{2}\in
M_{\infty}(QC(\mathbb{R}))\}
\end{equation*}
For $\psi$ having an infinite sum of the following form
\begin{equation*}
\psi(z_{1},z_{2})=\sum_{j=1}^{\infty}\varphi_{j}(z_{1})\eta_{j}(z_{2})
\end{equation*}
one may choose subsequences of $\{z_{1,n}\}$ and $\{z_{2,n}\}$
through a Cantor diagonalization argument so that equation (17)
holds for all $j\in\mathbb{N}$. The rest follows in the same way
as above.
\end{proof}
We are now ready to state and prove our first main result for
quasi-parabolic composition operators acting on
$H^{2}(\mathbb{H}^{2})$:
\begin{thma}
Let $\varphi:\mathbb{H}^{2}\rightarrow$ $\mathbb{H}^{2}$ be an
analytic self-map of $\mathbb{H}^{2}$ such that
$$\varphi(z_{1},z_{2})=(z_{1}+\psi_{1}(z_{1},z_{2}),z_{2}+\psi_{2}(z_{1},z_{2})) $$
where $\psi_{j}\in$ $H^{\infty}(\mathbb{H}^{2})$ with
$\Im(\psi_{j}(z_{1},z_{2}))
> \epsilon> 0$ for all $(z_{1},z_{2})\in$ $\mathbb{H}^{2}$, $j=1,2$. Then\\
$C_{\varphi}:$ $H^{2}(\mathbb{H}^{2})\rightarrow$
$H^{2}(\mathbb{H}^{2})$ is bounded. Moreover if $\psi_{j}\in$
$(QC(\mathbb{R})\otimes QC(\mathbb{R}))\cap
H^{\infty}(\mathbb{H}^{2})$ then we have
\[\sigma_{e}(C_{\varphi})\supseteq\{e^{i(z_{1}t_{1}+z_{2}t_{2})}:t_{1},t_{2}\in [0,\infty],z_{1}\in\mathcal{C}_{(\infty,\infty)}(\psi_{1})\textrm{and}\quad z_{2}\in\mathcal{C}_{(\infty,\infty)}(\psi_{2})\}\cup\{0\},\]
where\quad $\mathcal{C}_{(\infty,\infty)}(\psi)$ is the set of
cluster points of $\psi$ at $(\infty,\infty)$.
\end{thma}
\begin{proof} The boundedness of $C_{\varphi}$ is a consequence of
proposition 6. By Proposition 8 we have the following series
expansion for $C_{\varphi}$:
\begin{equation*}
    C_{\varphi} =\sum_{n,m=0}^{\infty}T_{\tau_{1}^{n}}T_{\tau_{2}^{m}}D_{\frac{(-it_{1})^{n}e^{-\alpha t_{1}}}{n!}}D_{\frac{(-it_{2})^{m}e^{-\alpha t_{2}}}{m!}},
\end{equation*}
where $\tau_{1}(z_{1},z_{2})=$ $i\alpha-\psi_{1}(z_{1},z_{2})$ and
$\tau_{2}(z_{1},z_{2})=$ $i\alpha-\psi_{2}(z_{1},z_{2})$. So we
conclude that if $\psi_{1}, \psi_{2}\in$ $QC(\mathbb{R})\otimes
QC(\mathbb{R})\cap H^{\infty}(\mathbb{H}^{2})$ with
$\Im({\psi_{j}(z_{1},z_{2})})>$ $\epsilon > 0$, $j=1,2$, then
\begin{equation*}
C_{\varphi}\in\Psi(QC(\mathbb{R}^{2}),C_{0}((\mathbb{R}^{+})^{2}))=\Psi_{2}
\end{equation*}
where $\varphi(z_{1},z_{2})=$
$(z_{1}+\psi_{1}(z_{1},z_{2}),z_{2}+\psi_{2}(z_{1},z_{2}))$. We
look at the values $\phi(C_{\varphi})$ of $\phi$ where $\phi\in
M(\Psi_{2}/K(H^{2}(\mathbb{H}^{2})))$. By Proposition 9 we have
\begin{equation*}
M(\Psi_{2}/K(H^{2}(\mathbb{H}^{2})))=\mathbb{M}\times\mathbb{M}
\end{equation*}
where $\mathbb{M}$ is the maximal ideal space of the C*-algebra
$\Psi(QC(\mathbb{R}),C_{0}(\mathbb{R}^{+}))/K(H^{2}(\mathbb{H}))$
generated by Toeplitz operators with $QC(\mathbb{R})$ symbols and
continuous Fourier multipliers modulo compact operators acting on
$H^{2}(\mathbb{H})$. Let $$\phi=\phi_{1}\hat{\otimes}\phi_{2}\in
M(\Psi_{2}/K(H^{2}(\mathbb{H})))$$ where
$\phi_{1},\phi_{2}\in\mathbb{M}$ as in the identification done in
equation (8). If $\phi_{1}=(x_{1},\infty)$ or
$\phi_{2}=(x_{2},\infty)$ where $x_{1},x_{2}\in\
M(QC(\mathbb{R}))$ then we have
\begin{equation*}
 \phi(C_{\varphi})=\sum_{n,m=0}^{\infty}\frac{1}{n!m!}\hat{\tau_{1}}(x_{1},x_{2})^{n}\hat{\tau_{2}}(x_{1},x_{2})^{m}\vartheta_{1,n}(\infty,t_{2})\vartheta_{2,m}(\infty,t_{2})=0\\
\end{equation*}
$\forall x_{1},x_{2}\in M(QC(\mathbb{R}))$ and $t_{2}\in
[0,\infty)$ since $\vartheta_{1,n}(\infty,t_{2})=0$ for all
$n\in\mathbb{N}$ where
$\vartheta_{1,n}(t_{1},t_{2})=(-it_{1})^{n}e^{-\alpha t_{1}}$ and
$\vartheta_{2,m}(t_{1},t_{2})=(-it_{2})^{m}e^{-\alpha t_{2}}$. If
$\phi_{1}=(x_{1},t_{1})$ and $\phi_{2}=(x_{2},t_{2})$ where
$x_{1},x_{2}\in M_{\infty}(QC(\mathbb{R}))$ and $t_{1}\neq\infty$,
$t_{2}\neq\infty$, then we have
\begin{eqnarray}
 &\nonumber &(\phi_{1}\hat{\otimes}\phi_{2})(C_{\varphi})=\sum_{n,m=0}^{\infty}\frac{1}{n!m!}\hat{\tau_{1}}(x_{1},x_{2})^{n}\hat{\tau_{2}}(x_{1},x_{2})^{m}\vartheta_{1,n}(t_{1},t_{2})\vartheta_{2,m}(t_{1},t_{2})\\
 &\nonumber &=(e^{-\alpha t_{1}}\sum_{n=0}^{\infty}\frac{1}{n!}\hat{\tau_{1}}(x_{1},x_{2})^{n}(-it_{1})^{n})(e^{-\alpha t_{2}}\sum_{m=0}^{\infty}\frac{1}{m!}\hat{\tau_{2}}(x_{1},x_{2})^{m}(-it_{2})^{m})\\
 & &= e^{i(\hat{\psi_{1}}(x_{1},x_{2})t_{1}+\hat{\psi_{2}}(x_{1},x_{2})t_{2})}
\end{eqnarray}
Since $\Psi_{2}/K(H^{2}(\mathbb{H}^{2}))$ is a closed *-subalgebra
of $B(H^{2}(\mathbb{H}^{2}))/K(H^{2}(\mathbb{H}^{2}))$ we have, by
equation (4),
\begin{equation*}
\sigma_{\Psi_{2}/K(H^{2}(\mathbb{H}^{2}))}(C_{\varphi})=\sigma_{B(H^{2}(\mathbb{H}^{2}))/K(H^{2}(\mathbb{H}^{2}))}(C_{\varphi})=\sigma_{e}(C_{\varphi})
\end{equation*}
and by equation (5) we have
\begin{equation}
\sigma_{\Psi_{2}/K(H^{2}(\mathbb{H}^{2}))}(C_{\varphi})=\sigma_{e}(C_{\varphi})\supseteq\{(\phi_{1}\hat{\otimes}\phi_{2})(C_{\varphi}):\phi_{1},\phi_{2}\in\mathbb{M}\}
\end{equation}
By proposition 10 we have
\begin{eqnarray}
 &\nonumber &\{\hat{\psi_{1}}(x_{1},x_{2}):x_{1},x_{2}\in M_{\infty}(QC(\mathbb{R}))\}\supseteq\mathcal{C}_{(\infty,\infty)}(\psi_{1}),\\
 & &\{\hat{\psi_{2}}(x_{1},x_{2}):x_{1},x_{2}\in M_{\infty}(QC(\mathbb{R}))\}\supseteq\mathcal{C}_{(\infty,\infty)}(\psi_{2})
\end{eqnarray}
Therefore by equations (19), (20) and (21) we have
\begin{eqnarray*}
 & &\sigma_{e}(C_{\varphi})\supseteq\{(\phi_{1}\hat{\otimes}\phi_{2})(C_{\varphi}):\phi_{1},\phi_{2}\in\mathbb{M}\}\supseteq\\
 & &\{e^{i(z_{1}t_{1}+z_{2}t_{2})}:t_{1},t_{2}\in [0,\infty],z_{1}\in\mathcal{C}_{(\infty,\infty)}(\psi_{1})\quad\textrm{and}\quad z_{2}\in\mathcal{C}_{(\infty,\infty)}(\psi_{2})\}\cup\{0\}
\end{eqnarray*}
\end{proof}

\begin{thmb}
Let $\varphi:\mathbb{D}^{2}\rightarrow$ $\mathbb{D}^{2}$ be an
analytic self-map of $\mathbb{D}^{2}$ such that
$$\varphi(z_{1},z_{2})=\left(\frac{2iz_{1}+\psi_{1}(z_{1},z_{2})(1-z_{1})}{2i+\psi_{1}(z_{1},z_{2})(1-z_{1})},\frac{2iz_{2}+\psi_{2}(z_{1},z_{2})(1-z_{2})}{2i+\psi_{2}(z_{1},z_{2})(1-z_{2})}\right) $$
where $\psi_{j}\in$ $H^{\infty}(\mathbb{D}^{2})$ with
$\Im(\psi_{j}(z_{1},z_{2}))
> \epsilon> 0$ for all $(z_{1},z_{2})\in$ $\mathbb{D}^{2}$, $j=1,2$. Then\\
$C_{\varphi}:$ $H^{2}(\mathbb{D}^{2})\rightarrow$
$H^{2}(\mathbb{D}^{2})$ is bounded. Moreover if $\psi_{j}\in$
$(QC\otimes QC)\cap H^{\infty}(\mathbb{D}^{2})$ then we have
\[\sigma_{e}(C_{\varphi})\supseteq\{e^{i(z_{1}t_{1}+z_{2}t_{2})}:t_{1},t_{2}\in [0,\infty],z_{1}\in\mathcal{C}_{(1,1)}(\psi_{1})\textrm{and}\quad z_{2}\in\mathcal{C}_{(1,1)}(\psi_{2})\}\cup\{0\},\]
where\quad $\mathcal{C}_{(1,1)}(\psi)$ is the set of cluster
points of $\psi$ at $(1,1)\in\mathbb{T}^{2}$.
\end{thmb}
\begin{proof}
Using the isometric isomorphism
$\Phi:H^{2}(\mathbb{D}^{2})\longrightarrow$
$H^{2}(\mathbb{H}^{2})$ introduced in section 2, if
$\varphi:\mathbb{D}^{2}\rightarrow$ $\mathbb{D}^{2}$ is of the
form
\begin{equation*}
\varphi(z_{1},z_{2})=\left(\frac{2iz_{1}+\psi_{1}(z_{1},z_{2})(1-z_{1})}{2i+\psi_{1}(z_{1},z_{2})(1-z_{1})},\frac{2iz_{2}+\psi_{2}(z_{1},z_{2})(1-z_{2})}{2i+\psi_{2}(z_{1},z_{2})(1-z_{2})}\right)
\end{equation*}
where $\psi_{j}\in$ $H^{\infty}(\mathbb{D})^{2}$ satisfies
$\Im(\psi_{j}(z_{1},z_{2}))>$ $\delta
> 0$ then, by equation (1), for $\tilde{\varphi}=$
$\mathfrak{C}_{2}^{-1}\circ\varphi\circ\mathfrak{C}_{2}$ we have
$\tilde{\varphi}(z_{1},z_{2})=(z_{1}+\psi_{1}\circ\mathfrak{C}_{2}(z_{1},z_{2}),z_{2}+\psi_{2}\circ\mathfrak{C}_{2}(z_{1},z_{2}))$
and
\begin{equation}
\Phi C_{\varphi}\Phi^{-1} =
    T_{(1+\frac{\psi_{1}\circ\mathfrak{C}_{2}(z_{1},z_{2})}{z_{1}+i})(1+\frac{\psi_{2}\circ\mathfrak{C}_{2}(z_{1},z_{2})}{z_{2}+i})}C_{\tilde{\varphi}}
\end{equation}
Since both
$T_{(1+\frac{\psi_{1}\circ\mathfrak{C}_{2}(z_{1},z_{2})}{z_{1}+i})(1+\frac{\psi_{2}\circ\mathfrak{C}_{2}(z_{1},z_{2})}{z_{2}+i})}$
and $C_{\tilde{\varphi}}$ are bounded and $\Phi$ is an isometric
isomorphism, it follows that $C_{\varphi}$ is also bounded. For
$\psi_{j}\in$ $QC\otimes QC$, $j=1,2$ we have both
\begin{equation*}
C_{\tilde{\varphi}}\in\Psi_{2}\quad\textrm{and}\quad
T_{(1+\frac{\psi_{1}\circ\mathfrak{C}_{2}(z_{1},z_{2})}{z_{1}+i})(1+\frac{\psi_{2}\circ\mathfrak{C}_{2}(z_{1},z_{2})}{z_{2}+i})}\in\Psi_{2}
\end{equation*}
and hence
\begin{equation*}
 \Phi\circ
C_{\varphi}\circ\Phi^{-1}\in\Psi_{2}.
\end{equation*}
For any $\phi_{1}\hat{\otimes}\phi_{2}\in
M(\Psi_{2}/K(H^{2}(\mathbb{H})))=\mathbb{M}\times\mathbb{M}$ we
observe that
\begin{equation*}
(\phi_{1}\hat{\otimes}\phi_{2})(T_{\frac{\psi_{1}\circ\mathfrak{C}_{2}(z_{1},z_{2})}{z_{1}+i}})=(\phi_{1}\hat{\otimes}\phi_{2})(T_{\frac{\psi_{2}\circ\mathfrak{C}_{2}(z_{1},z_{2})}{z_{2}+i}})=0
\end{equation*}
Hence we have
\begin{equation}
(\phi_{1}\hat{\otimes}\phi_{2})(\Phi\circ
C_{\varphi}\circ\Phi^{-1})=(\phi_{1}\hat{\otimes}\phi_{2})(C_{\tilde{\varphi}})
\end{equation}
By equation (9) we have
\begin{equation*}
\sigma_{e}(C_{\varphi})=\sigma_{e}(\Phi\circ
C_{\varphi}\circ\Phi^{-1})
\end{equation*}
By equations (20) and (23), we have thus
\begin{equation}
\sigma_{e}(C_{\varphi})\supseteq\{(\phi_{1}\hat{\otimes}\phi_{2})(C_{\tilde{\varphi}}):\phi_{1},\phi_{2}\in\mathbb{M}\}
\end{equation}
 By equations (21) and (24) we have
\begin{equation*}
\sigma_{e}(C_{\varphi})\supseteq\{e^{i(z_{1}t_{1}+z_{2}t_{2})}:t_{1},t_{2}\in[0,\infty],z_{1}\in\mathcal{C}_{(\infty,\infty)}(\psi_{1}\circ\mathfrak{C}_{2}),z_{2}\in\mathcal{C}_{(\infty,\infty)}(\psi_{2}\circ\mathfrak{C}_{2})\}\cup\{0\}
\end{equation*}
 Since for any $\psi\in H^{\infty}(\mathbb{D}^{2})$,
\begin{equation*}
\mathcal{C}_{(\infty,\infty)}(\psi\circ\mathfrak{C}_{2})=\mathcal{C}_{(1,1)}(\psi)
\end{equation*}
we conclude that
\[\sigma_{e}(C_{\varphi})\supseteq\{e^{i(z_{1}t_{1}+z_{2}t_{2})}:t_{1},t_{2}\in [0,\infty],z_{1}\in\mathcal{C}_{(1,1)}(\psi_{1})\textrm{and}\quad z_{2}\in\mathcal{C}_{(1,1)}(\psi_{2})\}\cup\{0\}\]
\end{proof}

\bibliographystyle{amsplain}

\end{document}